\begin{document}

\def\ds{\displaystyle}

\newtheorem{theorem}{Theorem}
\newtheorem{lemma}{Lemma}
\newtheorem{corollary}{Corollary}
\newtheorem{remark}{Remark}
\newtheorem{example}{Example}

\title{Reaction-Diffusion Problems on Time-Periodic Domains}
\author{Jane Allwright}
\date{}
\maketitle

\section*{Abstract}
Reaction-diffusion equations are studied on bounded, time-periodic domains with zero Dirichlet boundary conditions.
The long-time behaviour is shown to depend on the principal periodic eigenvalue of a transformed periodic-parabolic problem. We prove upper and lower bounds on this eigenvalue under a range of different assumptions on the domain, and apply them to examples.
The principal eigenvalue is considered as a function of the frequency, and results are given regarding its behaviour in the small and large frequency limits. A monotonicity property with respect to frequency is also proven.
A reaction-diffusion problem with a class of monostable nonlinearity is then studied on a periodic domain, and we prove convergence to either zero or a unique positive periodic solution.

\section{Introduction}
In this paper we study reaction-diffusion equations on time-dependent domains $\Omega(t)\subset \mathbb{R}^N$ which are bounded, connected, and periodic with period $T$: $\Omega(t) \equiv \Omega(t+T)$. 
Specifically, we consider non-negative solutions $\psi(x,t)$ to
\begin{equation}\label{eq_psi}
\frac{\partial \psi}{\partial t} = D \nabla^2 \psi +f(\psi)  \qquad \textrm{for }x\in \Omega(t)
\end{equation}
\begin{equation}\label{eq_psi_BC}
\psi(x,t)=0 \qquad\textrm{for }x\in \partial\Omega(t) 
\end{equation}
where $f$ is either linear: $f(\psi)=f'(0)\psi$, or a nonlinear, Lipschitz continuous function which satisfies the following conditions for some $K>0$:
\begin{align}\label{assumptions_on_f}
& f(0)=f(K)=0, \quad f'(0)>0\textrm{ exists}, \quad \frac{f(k)}{k}\textrm{ is non-increasing on }k>0.
\end{align}
Note that these assumptions on $f$ imply that $f(k)\leq f'(0)k$ for $k> 0$.

Such reaction-diffusion problems, with $\psi \geq 0$, can be used to model population dynamics, chemical diffusion, and other biological, ecological and physical processes. We are interested in domains whose boundaries change periodically due to some external influence.
In the context of population dynamics, this could represent a habitat whose boundary moves periodically due to the seasonal variation in temperature, water level, or snow cover, or due to a periodic cycle of land usage.
Population models typically include Dirichlet, Neumann or Robin boundary conditions. In this study we shall impose zero Dirichlet conditions, since these lead to the more interesting mathematical results and comparison with the solution on a fixed domain.

For non-negative solutions to the problem \eqref{eq_psi}, \eqref{eq_psi_BC} but on a constant bounded domain $\Omega$, the long-time behaviour depends on $\lambda(\Omega)$, where this is the principal Dirichlet eigenvalue of $-\nabla^2$ on $\Omega$.
If $f'(0)<D\lambda(\Omega)$ then the solution tends to zero as $t\rightarrow\infty$. 
If $f'(0)>D\lambda(\Omega)$ then the solution to the linear equation tends to infinity, and the solution to the nonlinear problem has a nontrivial lower bound. 
(This is straightforward to show if we bound the initial conditions above and below by appropriately large and small multiples of the principal eigenfunction.)
For an interval $0<x<L_0$ the critical threshold value for $f'(0)$ is $\frac{D\pi^2}{L_0^2}$, which leads to the definition of `critical length' as
\begin{equation}
L_{crit}=\pi\sqrt{\frac{D}{f'(0)}}.
\end{equation}
Thus, on a fixed interval $0<x<L_0$, there is a positive stationary solution to the linear problem if $L_0=L_{crit}$; otherwise the solution tends to zero or infinity depending whether $L_0<L_{crit}$ or $L_0>L_{crit}$ respectively.
Such results only apply for constant domains, not for those that vary with time.

There is very little published work addressing similar matters on time-dependent domains; however see \cite{JA1} and \cite{JA2} which consider the linear version of \eqref{eq_psi}, \eqref{eq_psi_BC} on time-dependent intervals $A(t)<x<A(t)+L(t)$.
The approach in both \cite{JA1} and \cite{JA2} is based upon a change of variables onto a fixed domain, followed by other changes of variables which allow the construction of exact solutions, subsolutions and supersolutions.
Here, similarly, we start in Section \ref{section_ppe} by
transforming the problem onto a fixed domain. Now, due to the assumed $T$-periodicity of the original domain, this converts the problem to a periodic-parabolic equation.
Although periodic-parabolic problems have been studied by several authors, and we make use of results from Castro and Lazer \cite{CasLaz}, Hess \cite{Hess}, Peng and Zhao \cite{PengZhao}, and Liu, Lou, Peng and Zhou \cite{LiuLouPengZho}, none of these have worked specifically on time-periodic domains or the periodic-parabolic equations that arise from them.

The results of Castro and Lazer \cite{CasLaz} show that the long-time behaviour of the solution to our transformed problem is determined by a principal periodic eigenvalue, $\mu$. It is worth emphasising the importance of \cite[Theorem 1]{CasLaz} (the existence and uniqueness of the principal periodic eigenvalue and eigenfunction) for our problem on a time-periodic domain. The principal periodic eigenvalue $\mu$ is an important threshold value, and its existence distinguishes the study of time-periodic domains from that of general time-dependent domains. Various results concerning the long-time behaviour of the solution can now be stated in terms of bounds on this eigenvalue.

In Section \ref{section_bounds} we derive upper and lower bounds on the principal periodic eigenvalue $\mu$ associated with a time-periodic domain $\Omega(t)$, under a range of different assumptions on $\Omega(t)$. These bounds, and their derivations, give original and useful insight into how problems on time-periodic domains behave. We also apply these bounds to some illustrative examples, including the interval $0<x<L(t)$ with
\begin{equation}\label{eq_intro_example}
L(t)=L_0(1+\varepsilon\sin(\omega t)),
\end{equation}
with $\omega>0$ and $0<\varepsilon<1$. In this case (see Example \ref{example_periodic_L_om_eps}), the bounds on $\mu$ imply that if
\begin{equation}
f'(0)<\frac{D\pi^2}{L_0^2(1-\varepsilon^2)^{3/2}},
\end{equation}
that is, if
\begin{equation}
L_0 <\pi\sqrt{\frac{D}{f'(0)(1-\varepsilon^2)^{3/2}}} = \frac{L_{crit}}{(1-\varepsilon^2)^{3/4}},
\end{equation}
then the solution to \eqref{eq_psi}, \eqref{eq_psi_BC} tends to zero. This result holds independently of the frequency $\omega>0$. Results from Section \ref{section_mu_omega} also imply that if we have the opposite inequality,
\begin{equation}
L_0 > \frac{L_{crit}}{(1-\varepsilon^2)^{3/4}},
\end{equation}
then there exist $\omega>0$ such that the solution (to the linear version of \eqref{eq_psi}, \eqref{eq_psi_BC}) tends to infinity.

Such results demonstrate that the `critical length' has a more intricate role for time-dependent domains than for constant domains.
This is also shown in \cite{JA2}, where the author studies the problem on an interval $0<x<L(t)$ of general time-dependent length $L(t)$. Even if $L(t)<L_{crit}$ for all $t$, the solution may or may not converge to zero. In \cite{JA2}, conditions on $L(t)$ are derived that guarantee each outcome. In particular, if 
\begin{equation}
L(t)=L_{crit}(1-\varepsilon (t+1)^{-k})
\end{equation}
with $0<\varepsilon<1$ and $0<k\leq 1$, then $\psi(x,t)\rightarrow 0$ uniformly in $x$ as $t\rightarrow\infty$; however if $k>1$ then there is a non-trivial lower bound: $\psi(x,t)\geq B\sin\left(\frac{\pi x}{L(t)}\right)$ for some $B>0$.

Here, the case \eqref{eq_intro_example} is just one example that we consider; we also study more general time-periodic intervals $A(t)<x<A(t)+L(t)$. Moreover, several of our results are valid for time-periodic domains $\Omega(t)$ in $\mathbb{R}^N$.

In Section \ref{section_mu_omega} we consider $\mu$ as a function of the frequency $\omega=\frac{2\pi}{T}$ and prove results concerning the small and large frequency limits, as well as a monotonicity property.
The first step is to convert our principal eigenvalue problem into a $1$-periodic problem; it then becomes an equation of the form
\begin{equation}
\frac{\omega}{2\pi}\frac{\partial \phi}{\partial s}- \mathcal{L}_{\omega}(\xi,s)\phi = \mu(\omega)\phi(\xi,s) \qquad \xi\in\Omega_0, s\in[0,1].
\end{equation}
The operator $\mathcal{L}_{\omega}(\xi,s)$ is $1$-periodic in $s$, but it has certain terms that depend on, and scale with, $\omega$.

Our proofs of the limit $\lim_{\omega\rightarrow0} \mu(\omega)$, and of a monotonicity result, are inspired by methods from \cite{LiuLouPengZho} but have to be adapted to our own case.
In \cite{LiuLouPengZho}, Liu, Lou, Peng and Zhou study the dependence of a principal periodic eigenvalue on the frequency, but for a different problem.
They consider the principal periodic eigenvalue $\hat{\lambda}(\omega)$ of
\begin{equation}
\frac{\omega}{2\pi}\frac{\partial \hat{\phi}}{\partial s}- \mathcal{\hat{L}}(\xi,s)\hat{\phi} = \hat{\lambda}(\omega)\hat{\phi}(\xi,s) \qquad \xi\in\Omega_0, s\in[0,1]
\end{equation}
where the coefficients of the operator $\mathcal{\hat{L}}(\xi,s)$ are $1$-periodic in $s$ and do not depend on $\omega$.
The result of \cite[Theorem 1.3(i)]{LiuLouPengZho} is that
\begin{equation}\label{eq_lambda_om0}
\lim_{\omega\rightarrow 0}  \hat{\lambda}(\omega) = \int_0^1 \lambda^{0}(s)ds
\end{equation}
where for each $0\leq s\leq 1$, $\lambda^{0}(s)$ is the principal Dirichlet eigenvalue of the elliptic operator $-\mathcal{\hat{L}}(\xi,s)$ on $\Omega_0$.
This result can be extended in a natural way to the operator $-\mathcal{L}_{\omega}(\xi,s)$, since the $\omega$-dependence is sufficiently smooth and the uniform ellipticity condition holds independently of $\omega$ as $\omega\rightarrow 0$. Thus, by adapting the proof of \cite[Theorem 1.3(i)]{LiuLouPengZho}, in Theorem \ref{theorem_om_tends_0} we identify $\lim_{\omega\rightarrow0} \mu(\omega)$ for a $\frac{2\pi}{\omega}$-periodic domain in any dimension. In Corollary \ref{corollary_omega_tends_0} we then show that if $\mu(\omega)$ is the principal periodic eigenvalue associated with $\Omega(t)=\tilde{\Omega}\left(\frac{\omega t}{2\pi}\right)$, where $\tilde{\Omega}(s)$ is a bounded and $1$-periodic domain, then
\begin{equation}
\lim_{\omega\rightarrow 0}\mu(\omega) =\int_0^1 D\lambda(\tilde{\Omega}(s))ds.
\end{equation}
In this formula, for each $0\leq s\leq 1$, $\lambda (\tilde{\Omega}(s))$ denotes the principal Dirichlet eigenvalue of $-\nabla^2$ on $\tilde{\Omega}(s)$.

Next we consider $\omega\rightarrow\infty$. The large frequency limit of $\hat{\lambda}(\omega)$ (the principal periodic eigenvalue of Liu, Lou, Peng and Zhou in \cite{LiuLouPengZho}) is given in \cite[Theorem 1.3(ii)]{LiuLouPengZho}. By adapting an argument from \cite[Theorem 3.10]{Nad}, they prove that $\lim_{\omega\rightarrow \infty} \hat{\lambda}(\omega) = \lambda_{\infty}$ where $\lambda_{\infty}$ is the principal eigenvalue of the elliptic operator whose coefficients are equal to the time-averages of those of $-\mathcal{\hat{L}}(\xi,s)$. However, neither this result nor the analysis of \cite{Nad} applies in cases such as ours, when some of the coefficients depend on $\omega$ and become unbounded as $\omega\rightarrow\infty$.

In our case, as we shall see, the behaviour of $\mu(\omega)$ as $\omega\rightarrow \infty$ depends on the detail of the problem. 
We show that very different types of asymptotic behaviour of $\mu(\omega)$ are possible as $\omega \rightarrow\infty$. Indeed, let
\begin{equation}
L(t)=L_0 l\left(\frac{\omega t}{2\pi}\right), \qquad A(t)=A_0 a\left(\frac{\omega t}{2\pi}\right)
\end{equation}
where $L_0>0$, $A_0\geq0$, $\omega>0$, and where
$l(\cdot)$ and $a(\cdot)$ are $1$-periodic functions with $l\geq 1$. Let $\mu(\omega)$ be the principal periodic eigenvalue associated with the domain $(A(t),A(t)+L(t))$. We show that if $a(\cdot)$ is constant then $\mu(\omega)=O(1)$ as $\omega \rightarrow\infty$, but if $a(\cdot)$ is non-constant then there exist $C_1$, $C_2$ such that: if $\frac{A_0}{L_0}<C_1$ then $\mu(\omega)=O(1)$ as $\omega \rightarrow\infty$, and if
$\frac{A_0}{L_0}>C_2$ then $\mu(\omega) \rightarrow\infty$ at the rate $\omega^2$ as $\omega \rightarrow\infty$ (see Theorem \ref{theorem_om_tends_infty}).

In \cite[Theorem 1.1]{LiuLouPengZho}, Liu, Lou, Peng and Zhou prove that if their parabolic operator has no advection term then the principal periodic eigenvalue $\hat{\lambda}(\omega)$ is non-decreasing with respect to $\omega>0$.
In the final part of Section \ref{section_mu_omega} we similarly show that the principal periodic eigenvalue $\mu(\omega)$ associated with a periodic domain $\Omega(t) \subset \mathbb{R}^N$ is also non-decreasing with respect to $\omega>0$.

We conclude the paper with Section \ref{section_nonlinear}, in which we consider the nonlinear problem on a periodic domain in $\mathbb{R}^N$, with $f$ satisfying \eqref{assumptions_on_f}. Using a result of Hess \cite{Hess} and methods involving the Poincar\'{e} map, we prove convergence to zero if $f'(0)<\mu$, or to a unique positive periodic solution if $f'(0)>\mu$. That is, if $f'(0)>\mu$ then there is a unique positive solution $\psi^{*}(x,t)$ to \eqref{eq_psi}, \eqref{eq_psi_BC} such that $\psi^{*}(x,t)\equiv \psi^{*}(x,t+T)$, and for any non-negative initial conditions, the solution $\psi(x,nT+t)$ converges uniformly to $\psi^{*}(x,t)$ as $n\rightarrow\infty$.

\section{Principal periodic eigenvalue $\mu$}\label{section_ppe}
Throughout the paper, we shall assume there is a one-to-one mapping $h(\cdot ,t): \overline{\Omega(t)}\rightarrow \overline{\Omega_0}$ which transforms $\Omega(t)$ into a bounded, connected reference domain $\Omega_0$ with sufficiently smooth boundary (at least $C^{2+\varepsilon}$ for some $\varepsilon>0$), and such that the change of variables $\xi=h(x,t)$ and $u(\xi,t)=\psi(x,t)$ transforms \eqref{eq_psi}, \eqref{eq_psi_BC} into a parabolic equation of the form
\begin{equation}\label{eq_u_ij}
\frac{\partial u}{\partial t} = \mathcal{L}(\xi,t) u + f(u) \qquad\textrm{for } \xi\in\Omega_0
\end{equation}
\begin{equation}\label{eq_u_ij_BC}
u(\xi,t)=0\qquad\textrm{for } \xi\in\partial\Omega_0,
\end{equation}
where
\begin{equation}\label{eq_L_aij_bj_cj}
\mathcal{L}(\xi,t) u  = \sum_{i,j}a_{ij}(\xi,t)\frac{\partial^2 u}{\partial \xi_i\partial \xi_j} +\sum_j \left(b_j(\xi,t)+c_j(\xi,t)\right)\frac{\partial u}{\partial \xi_j} \qquad\textrm{for } \xi\in \Omega_0,
\end{equation}
\begin{equation}\label{eq_aij_bj_cj}
a_{ij}(\xi,t)= \sum_{k}D\left(\frac{\partial h_i}{\partial x_k}\frac{\partial h_j}{\partial x_k}\right), \qquad
b_j(\xi,t)= -\frac{\partial h_j}{\partial t}, \qquad
c_j(\xi,t)=D\nabla^2 h_j.
\end{equation}
We assume that the map $h$ is such that $a_{ij}$, $b_j$, $c_j$ belong to $C^{\alpha,\alpha/2}(\overline{\Omega_0}\times[0,T])$ for some $\alpha>0$, and that $a_{ij}$ is uniformly elliptic.
For example, if the time-dependent domain is an interval $A(t)<x<A(t)+L(t)$ where $L(t)>0$ and $A(t)$ are $C^{2+\alpha}$ functions for some $\alpha>0$, then we can transform onto a fixed reference domain $0<\xi< L_0$ by letting $\xi=\left(\frac{x-A(t)}{L(t)}\right)L_0$. The solution $u(\xi,t)=\psi(x,t)$ then satisfies
\begin{equation}\label{eq_u_xi}
\frac{\partial u}{\partial t} = D \frac{L_0^2}{L(t)^2} \frac{\partial^2 u}{\partial \xi^2}+\left(\frac{\dot{A}(t)L_0+\xi\dot{L}(t)}{L(t)}\right)\frac{\partial u}{\partial \xi} +f(u)  \qquad \textrm{in } 0< \xi< L_0
\end{equation}
with $u(\xi,t)=0$ at $\xi=0$ and $\xi=L_0$.

Since $\Omega(t)$ is periodic with period $T$, the map $h$ and the coefficients of $\mathcal{L}$ are also $T$-periodic in $t$.
By Theorem 1 of Castro and Lazer \cite{CasLaz} there exists a value $\mu$ and a function $\phi(\xi,t)$ such that
\begin{equation}\label{eq_princ_eig1}
\frac{\partial \phi}{\partial t} - \mathcal{L} \phi = \mu\phi  \qquad \textrm{in } \xi\in\Omega_0,\ t\in\mathbb{R}
\end{equation}
\begin{equation}\label{eq_princ_eig2}
\phi(\xi,t)=0 \qquad \textrm\qquad\textrm{for } \xi \in\partial\Omega_0
\end{equation}
\begin{equation}\label{eq_princ_eig3}
\phi(\xi,t)>0 \qquad \textrm\qquad\textrm{for } \xi\in\Omega_0
\end{equation}
\begin{equation}\label{eq_princ_eig4}
\phi(\xi,t)\equiv\phi(\xi,t+T) .
\end{equation}
This function $\phi$ is unique up to scaling \cite[Theorem 1]{CasLaz}, and is called the principal periodic eigenfunction, while $\mu$ is called the principal periodic eigenvalue. Here, we shall say that $\mu$ is `the principal periodic eigenvalue of $\Omega(t)$' to mean that it is the principal periodic eigenvalue of \eqref{eq_princ_eig1}, \eqref{eq_princ_eig2}, \eqref{eq_princ_eig3}, \eqref{eq_princ_eig4}, when $\mathcal{L}$ is defined by \eqref{eq_L_aij_bj_cj}, \eqref{eq_aij_bj_cj}.

The function $u(\xi,t)=\phi(x,t)e^{(f'(0)-\mu)t}$ is a solution to the linear reaction-diffusion equation \eqref{eq_u_ij}, \eqref{eq_u_ij_BC} with $f(u)=f'(0) u$. So, if the initial conditions satisfy $b\phi(\xi,0)\leq u(\xi,0)\leq a\phi(\xi,0)$ for some $0<b\leq a$, then by the comparison principle the solution to the linear equation satisfies
\begin{equation}\label{eq_u_phi_mu}
b\phi(\xi,t)e^{(f'(0)-\mu)t}\leq u(\xi,t)\leq a\phi(\xi,t)e^{(f'(0)-\mu)t} \qquad \textrm{for all }t\geq 0.
\end{equation}
The principal periodic eigenvalue is therefore a threshold such that
if $f'(0)>\mu$ then $u(\xi,t)\rightarrow\infty$ as $t\rightarrow\infty$, whereas if $f'(0)<\mu$ then $u(\xi,t)\rightarrow 0$. See also \cite[page 192]{CantrellCosner}.

\section{Bounds on $\mu$ and examples}\label{section_bounds}
In this section we prove some bounds on $\mu$, the principal periodic eigenvalue of a $T$-periodic domain $\Omega(t)$.
First we give a lower bound, which is valid for time-dependent domains in any dimension.
\begin{theorem}\label{theorem_mu_bounds1a}
Let $\Omega(t)$ be $T$-periodic and let $\mu$ be the principal periodic eigenvalue of $\Omega(t)$.
At each fixed time $0\leq t\leq T$, let $\lambda(\Omega(t))$ be the principal eigenvalue of $-\nabla^2$ on the domain $\Omega(t)$ with zero Dirichlet boundary conditions. Then
\begin{equation}\label{eq_mu_int_Dlambda}
\mu\geq \frac{1}{T}\int_0^T D\lambda(\Omega(t)) dt.
\end{equation}
\end{theorem}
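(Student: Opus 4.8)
The plan is to transform the eigenvalue problem back to the physical moving domain, where $-\nabla^2$ is self-adjoint and the variational characterisation of $\lambda(\Omega(t))$ is directly available, and then run an energy (Gr\"onwall-type) argument on the $L^2$-norm.

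First, recall from Section~\ref{section_ppe} that $u(\xi,t)=e^{-\mu t}\phi(\xi,t)$ solves the homogeneous transformed equation $u_t=\mathcal{L}u$ in $\Omega_0$ with $u=0$ on $\partial\Omega_0$. Undoing the change of variables $\xi=h(x,t)$, the function
\[
\Psi(x,t):=e^{-\mu t}\,\phi\big(h(x,t),t\big)
\]
is a classical, strictly positive solution of the linear heat equation $\Psi_t=D\nabla^2\Psi$ on the time-periodic domain $\Omega(t)$, with $\Psi=0$ on $\partial\Omega(t)$; the assumed regularity of $h$, of $\partial\Omega_0$ and of $\phi$ makes this rigorous. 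Since $h$ and $\phi$ are $T$-periodic in $t$ and $\Omega(T)=\Omega(0)$, we have $\Psi(x,T)=e^{-\mu T}\Psi(x,0)$.

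Next, set $E(t):=\int_{\Omega(t)}\Psi(x,t)^2\,dx$, which is positive and, by Reynolds' transport theorem, differentiable; because $\Psi$ vanishes on $\partial\Omega(t)$ the moving-boundary term drops out, and Green's identity (whose boundary term also vanishes) gives
\[
E'(t)=2\int_{\Omega(t)}\Psi\,\Psi_t\,dx=2D\int_{\Omega(t)}\Psi\,\nabla^2\Psi\,dx=-2D\int_{\Omega(t)}|\nabla\Psi|^2\,dx.
\]
The variational characterisation of the principal Dirichlet eigenvalue of $\Omega(t)$ yields $\int_{\Omega(t)}|\nabla\Psi|^2\,dx\ \geq\ \lambda(\Omega(t))\int_{\Omega(t)}\Psi^2\,dx$, hence $\frac{d}{dt}\log E(t)\leq -2D\lambda(\Omega(t))$.

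Finally, integrate this differential inequality over $[0,T]$. The left-hand side equals $\log E(T)-\log E(0)=-2\mu T$ by the periodicity relation above, so $-2\mu T\leq -2D\int_0^T\lambda(\Omega(t))\,dt$, which rearranges to the claimed bound. The only point requiring genuine care is the justification of the transport-theorem computation on the moving domain — the differentiability of $E$ and the vanishing of both boundary integrals — though this is routine given the stated smoothness of $h$ and $\partial\Omega_0$. It is also worth noting where equality could occur: the Rayleigh inequality is an equality only if $\Psi(\cdot,t)$ is a multiple of the principal Dirichlet eigenfunction of $\Omega(t)$ for almost every $t$, which generically fails unless the domain is essentially stationary, so one expects the bound to be strict for genuinely moving domains.
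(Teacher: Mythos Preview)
Your proof is correct and follows essentially the same approach as the paper: both compute the $L^2$-energy on the moving domain, use Reynolds' transport theorem together with the Dirichlet condition to eliminate the boundary term, apply the variational characterisation of $\lambda(\Omega(t))$, and integrate the resulting Gr\"onwall inequality. The only minor difference is that you work directly with the principal eigenfunction and exploit its exact quasi-periodicity $\Psi(\cdot,T)=e^{-\mu T}\Psi(\cdot,0)$ to conclude after a single period, whereas the paper uses a generic solution of the linear equation $\psi_t=D\nabla^2\psi+f'(0)\psi$ and concludes indirectly via the threshold characterisation of $\mu$; your version is arguably the cleaner of the two.
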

\begin{proof}
Let $\psi$ be a solution to the problem \eqref{eq_psi}, \eqref{eq_psi_BC} with $f(\psi)=f'(0) \psi$, and define
\begin{equation}
E(t)=\frac{1}{2}\int_{\Omega(t)}\psi(x,t)^2 dx.
\end{equation}
We differentiate $E(t)$, noting that the additional contribution from the moving boundary $\partial\Omega(t)$ vanishes due to the zero Dirichlet conditions. It follows from \eqref{eq_psi}, \eqref{eq_psi_BC} that
\begin{equation}
\frac{dE}{dt}=\int_{\Omega(t)}\psi(D\nabla^2\psi +f'(0)\psi )dx = \int_{\Omega(t)}(-D\vert\nabla\psi\vert^2 +f'(0)\psi^2)dx.
\end{equation}
Then for each time $t$, we use Poincar\'{e}'s inequality to get
\begin{align}
\frac{dE}{dt}
&\leq \left(-D\lambda(\Omega(t)) +f'(0)\right) \int_{\Omega(t)} \psi^2 dx =2\left(f'(0)-D\lambda(\Omega(t))\right)E(t).
\end{align}
Therefore, for $t\geq 0$
\begin{align}
0\leq E(t) &\leq E(0)\exp{\left(2\int_0^t \left(f'(0)-D\lambda(\Omega(\zeta)) \right) d\zeta \right)}\\
&= E(0)\exp{\left(2\left(f'(0)t - \frac{t}{T}\int_0^T D \lambda(\Omega(\zeta))d\zeta \right)+O(1) \right)} \quad \textrm{as } t\rightarrow\infty\label{eq_Et_exp_decay}
\end{align}
where in the last line we have used the fact that $\lambda(\Omega(t))$ is $T$-periodic.
So, if $f'(0)-\frac{1}{T}\int_0^T D \lambda(\Omega(\zeta))d\zeta<0$ then \eqref{eq_Et_exp_decay} implies that $\int_{\Omega(t)}\psi(x,t)^2 dx \rightarrow 0$ as $t\rightarrow\infty$.
Therefore, the condition $f'(0)-\frac{1}{T}\int_0^T D \lambda(\zeta)d\zeta<0$ must imply that $f'(0)<\mu$, and so the bound \eqref{eq_mu_int_Dlambda} is proved.
\end{proof}
\begin{remark}
If we were instead interested in the principal periodic eigenvalue $\mu_V$ of an operator
$\frac{\partial}{\partial t}-D\nabla^2+V(x,t)$ on $\Omega(t)$, where $V$ was a continuous function on $\Omega(t)$ and periodic in $t$ with the same period as the domain, then the same proof would show that
\begin{equation}\nonumber
\mu\geq \frac{1}{T}\int_0^T \left( D\lambda(\Omega(t))-\max_y V(y,t) \right) dt.
\end{equation}
\end{remark}
\begin{example}\label{example_periodic_AL1}
Let $\Omega(t)=(A(t), A(t)+L(t))$ where $L(t)>0$ and $A(t)$ are $T$-periodic. By Theorem \ref{theorem_mu_bounds1a},
\begin{equation}
\mu\geq \frac{1}{T}\int_0^T \frac{D\pi^2}{L(t)^2} dt.
\end{equation}
In particular, if $L(t)\equiv l>0$ is constant and $A(t)$ is periodic then we have the lower bound $\mu \geq \frac{D\pi^2}{l^2}$. This means that whenever the solution on the fixed interval $(0,l)$ tends to zero (i.e. $f'(0)<\frac{D\pi^2}{l^2}$), the solution on a periodic interval $(A(t),A(t)+l)$ also tends to zero (i.e. $f'(0)<\mu$) for every periodic function $A(t)$.
\end{example}

The next result follows from the comparison principle, and is also valid for time-dependent domains in any dimension.
\begin{theorem}\label{theorem_mu_bounds1b}
Let $\Omega(t)$ be $T$-periodic and let $\mu$ be the principal periodic eigenvalue of $\Omega(t)$.
Suppose there is a domain $\Omega_1$ such that $\Omega_1 \subset \Omega(t)$ for all $t$, and let $\lambda(\Omega_1)$ be the principal eigenvalue of $-\nabla^2$ on $\Omega_1$ with zero Dirichlet boundary conditions. Then
\begin{equation}\label{eq_mu_Omega1}
\mu\leq D\lambda(\Omega_1).
\end{equation}
\end{theorem}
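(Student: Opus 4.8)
The plan is to transport the principal periodic eigenfunction back to the original moving variables, compare it on the fixed subdomain $\Omega_1$ with a solution of the heat equation built from the ordinary Dirichlet eigenfunction of $\Omega_1$, and then read off the inequality from the long‑time behaviour, crucially using that the periodic eigenfunction is \emph{bounded}.

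First I would record the two explicit solutions of the linear (heat) equation that will be compared. Let $\phi$ be the principal periodic eigenfunction of \eqref{eq_princ_eig1}--\eqref{eq_princ_eig4}, and set $\Phi(x,t):=\phi(h(x,t),t)$. Since $\partial_t\phi-\mathcal{L}\phi=\mu\phi$, the function $\phi(\xi,t)e^{-\mu t}$ solves $\partial_t u=\mathcal{L}u$, so under the change of variables used to derive \eqref{eq_u_ij} the function $\psi(x,t):=\Phi(x,t)e^{-\mu t}$ solves $\partial_t\psi=D\nabla^2\psi$ on $\Omega(t)$ with $\psi=0$ on $\partial\Omega(t)$, with $\Phi>0$ in $\Omega(t)$ and $\Phi(\cdot,t)\equiv\Phi(\cdot,t+T)$; in particular $\Phi$ is bounded on $\{(x,t):x\in\overline{\Omega(t)}\}$. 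On the other side, let $\varphi_1>0$ be a principal Dirichlet eigenfunction of $-\nabla^2$ on $\Omega_1$, so $-\nabla^2\varphi_1=\lambda(\Omega_1)\varphi_1$ in $\Omega_1$ and $\varphi_1=0$ on $\partial\Omega_1$; then $w(x,t):=c\,\varphi_1(x)e^{-D\lambda(\Omega_1)t}$ also solves $\partial_t w=D\nabla^2 w$ on $\Omega_1$ with $w=0$ on $\partial\Omega_1$.

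Next I would apply the parabolic maximum principle on the cylinder $\Omega_1\times(0,\infty)$, which makes sense because $\Omega_1\subset\Omega(t)$ for all $t$, so both $\psi$ and $w$ are defined and solve the same linear equation there. On the parabolic boundary, $w=0\leq\psi$ on $\partial\Omega_1\times[0,\infty)$ (as $\psi\geq0$), while on $\Omega_1\times\{0\}$ one chooses the scaling constant $c>0$ small enough that $c\,\varphi_1(x)\leq\Phi(x,0)=\psi(x,0)$. The comparison principle then yields $w\leq\psi$ on $\Omega_1\times[0,\infty)$, i.e.
\[
c\,\varphi_1(x)\,e^{-D\lambda(\Omega_1)t}\leq \Phi(x,t)\,e^{-\mu t}\qquad\text{for }x\in\Omega_1,\ t\geq0,
\]
hence $c\,\varphi_1(x)\leq\Phi(x,t)\,e^{(D\lambda(\Omega_1)-\mu)t}$. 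If $\mu>D\lambda(\Omega_1)$ the right‑hand side tends to $0$ as $t\to\infty$ (because $\Phi$ is bounded, by periodicity), forcing $\varphi_1\equiv0$, a contradiction; therefore $\mu\leq D\lambda(\Omega_1)$.

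The only delicate point I anticipate is securing the comparison at $t=0$. If $\overline{\Omega_1}\subset\Omega(t)$ for all $t$, this is immediate: $\Phi(\cdot,0)$ is then bounded below by a positive constant on the compact set $\overline{\Omega_1}$ while $\varphi_1$ is bounded, so a small $c>0$ works. If $\partial\Omega_1$ is only assumed to meet $\overline{\Omega(t)}$, one first proves the bound with $\Omega_1$ replaced by an exhausting family $\Omega_1^{(k)}\Subset\Omega_1$, and then passes to the limit using $\lambda(\Omega_1^{(k)})\downarrow\lambda(\Omega_1)$. Everything else is a routine application of the maximum principle together with the boundedness of $\Phi$ coming from $T$‑periodicity.
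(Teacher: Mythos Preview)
Your proof is correct and follows essentially the same approach as the paper: both arguments restrict to the fixed subdomain $\Omega_1\subset\Omega(t)$ and apply the parabolic comparison principle there, using that a solution on $\Omega(t)$ is nonnegative on $\partial\Omega_1$. The paper phrases this via the threshold characterisation (if $f'(0)<\mu$ then the solution on $\Omega(t)$ decays, hence so does the one on $\Omega_1$, forcing $f'(0)<D\lambda(\Omega_1)$), whereas you compare the two explicit eigenfunction solutions $\Phi e^{-\mu t}$ and $c\varphi_1 e^{-D\lambda(\Omega_1)t}$ directly; the underlying idea is the same.
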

\begin{proof}
Let $\psi_1$ be the solution to \eqref{eq_psi}, \eqref{eq_psi_BC} with $f(u)=f'(0) u$, but on the fixed domain $\Omega_1$, and with non-trivial initial conditions satisfying $0\leq \psi_1(x,0)\leq \psi(x,0)$. By the comparison principle, $0\leq \psi_1(x,t)\leq \psi(x,t)$ for all $x\in \Omega_1$ and $t\geq0$. If $f'(0)<\mu$ then $\psi\rightarrow 0$ as $t\rightarrow\infty$, which implies that $\psi_1\rightarrow 0$ and so that $f'(0)<D\lambda(\Omega_1)$. Therefore we conclude that $\mu \leq D\lambda(\Omega_1)$.
\end{proof}
\begin{example}\label{example_periodic_AL2}
Let $\Omega(t)=(A(t), A(t)+L(t))$ for some $T$-periodic functions $L(t)>0$ and $A(t)$ satisfying
\begin{equation}
\max_{[0,T]} A < \min_{[0,T]}(A+L).
\end{equation}
The fixed interval $\Omega_1:=(\max(A), \min(A+L))$ is always contained within the domain $(A(t), A(t)+L(t))$. By Theorem \ref{theorem_mu_bounds1b}, we get the upper bound
\begin{equation}
\mu \leq \frac{D\pi^2}{(\min(A+L) - \max A)^2}.
\end{equation}
In particular, if $A(t)\equiv A(0)$ is constant and $L(t)>0$ is periodic, then $\mu \leq \frac{D\pi^2}{(\min L)^2}$.
\end{example}

Let us continue to consider to the linear equation on the time-dependent interval $A(t)<x<A(t)+L(t)$ where $L(t)>0$ and $A(t)$ are $T$-periodic and belong to $C^{2+\alpha}([0,T])$ for some $\alpha>0$. 
We shall apply the same changes of variables that were introduced in \cite{JA1}, \cite{JA2}. Namely, first we transform onto a fixed reference domain $0<\xi< L_0$ by letting $\xi=\left(\frac{x-A(t)}{L(t)}\right)L_0$. The solution $u(\xi,t)=\psi(x,t)$ then satisfies equation \eqref{eq_u_xi} with $f(u)=f'(0)u$, and $u(\xi,t)=0$ at $\xi=0$ and $\xi=L_0$.
Next, we let $w(\xi,t)=u(\xi,t)H(\xi,t)e^{-f'(0)t}$ where
\begin{equation}\label{eq_w_uHe}
H(\xi, t)=\left(\frac{L(t)}{L(0)}\right)^{1/2}\exp{\left( \int\limits_0^t \frac{\dot{A}(\zeta)^2}{4D} d\zeta+\frac{\xi^2 \dot{L}(t)L(t)}{4DL_0^2}+\frac{\xi\dot{A}(t)L(t)}{2DL_0}\right)}.
\end{equation}
As in \cite{JA1}, \cite{JA2}, we find that $w$ satisfies the problem
\begin{equation}\label{eq_w}
\frac{\partial w}{\partial t} = D \frac{L_0^2}{L(t)^2} \frac{\partial^2 w}{\partial \xi^2}+\left( \frac{\xi^2\ddot{L}(t)L(t)}{4DL_0^2} + \frac{\xi\ddot{A}(t)L(t)}{2DL_0} \right)w  \qquad \textrm{in } 0< \xi< L_0
\end{equation}
\begin{equation}\label{eq_w_BC}
w(\xi,t)=0 \qquad \textrm\qquad\textrm{at } \xi=0 \textrm{ and } \xi=L_0 .
\end{equation}
From \cite[Theorem 2.1]{JA2} we have the following result.
\begin{theorem}\label{theorem_Q}
Let $w(\xi,t) \geq 0$ satisfy \eqref{eq_w}, \eqref{eq_w_BC}, and assume 
that $C_1 \sin\left(\frac{\pi\xi}{L_0}\right) \leq w(\xi,0) \leq C_2 \sin\left(\frac{\pi\xi}{L_0}\right)$ for some $0<C_1\leq C_2$. Define
\begin{align}\label{eq_Q_def}
\overline{Q}(t)=\max_{0\leq\eta\leq 1} \left(\frac{\eta^2\ddot{L}(t)L(t)}{2} + \eta \ddot{A}(t)L(t) \right),\qquad
\underline{Q}(t)=-\min_{0\leq\eta\leq 1} \left( \frac{\eta^2\ddot{L}(t)L(t)}{2} + \eta \ddot{A}(t)L(t) \right).
\end{align}
Then for every $t\geq 0$,
\begin{equation}\label{eq_w_bounds_Q}
C_1 \sin\left(\frac{\pi\xi}{L_0}\right)e^{\int_0^t \left(-\frac{D\pi^2}{L(\zeta)^2}-\frac{\underline{Q}(\zeta)}{2D}\right)d\zeta}\leq w(\xi,t)\leq C_2\sin\left(\frac{\pi\xi}{L_0}\right)e^{\int_0^t \left(-\frac{D\pi^2}{L(\zeta)^2}+\frac{\overline{Q}(\zeta)}{2D}\right)d\zeta}.
\end{equation}
\end{theorem}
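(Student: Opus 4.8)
The plan is to prove both inequalities in \eqref{eq_w_bounds_Q} by the parabolic comparison principle, using explicit super- and subsolutions obtained by multiplying the first Dirichlet eigenfunction $\sin(\pi\xi/L_0)$ of $-\partial_\xi^2$ on $(0,L_0)$ by a purely time-dependent exponential factor.

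First I would rewrite the zeroth-order coefficient of \eqref{eq_w} in the variable $\eta=\xi/L_0\in[0,1]$, where it equals $\tfrac{1}{2D}\big(\tfrac{\eta^2\ddot L(t)L(t)}{2}+\eta\ddot A(t)L(t)\big)$; by the definition \eqref{eq_Q_def} of $\overline Q$ and $\underline Q$, this quantity lies between $-\underline Q(t)/(2D)$ and $\overline Q(t)/(2D)$ for every $\xi\in[0,L_0]$ and every $t$. Then I would set $\overline w(\xi,t)=C_2\sin(\pi\xi/L_0)\exp\!\big(\int_0^t(-\tfrac{D\pi^2}{L(\zeta)^2}+\tfrac{\overline Q(\zeta)}{2D})\,d\zeta\big)$ and evaluate $\partial_t\overline w-D\tfrac{L_0^2}{L(t)^2}\partial_\xi^2\overline w-\big(\tfrac{\xi^2\ddot L L}{4DL_0^2}+\tfrac{\xi\ddot A L}{2DL_0}\big)\overline w$. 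Using $\partial_\xi^2\sin(\pi\xi/L_0)=-(\pi/L_0)^2\sin(\pi\xi/L_0)$, the term $-D\pi^2/L(t)^2$ coming from $\partial_t$ cancels against the $+D\pi^2/L(t)^2$ coming from the diffusion term, leaving $\overline w\big(\tfrac{\overline Q(t)}{2D}-(\tfrac{\xi^2\ddot L L}{4DL_0^2}+\tfrac{\xi\ddot A L}{2DL_0})\big)$, which is $\ge 0$ since $\overline w\ge 0$ and the coefficient is $\le\overline Q(t)/(2D)$ on $[0,L_0]$. Hence $\overline w$ is a supersolution; symmetrically, the function $\underline w$ defined with $C_1$ in place of $C_2$ and $-\underline Q$ in place of $\overline Q$ is a subsolution. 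At $t=0$ the hypothesis on $w(\xi,0)$ gives $\underline w(\xi,0)\le w(\xi,0)\le\overline w(\xi,0)$, and $\overline w$, $w$, $\underline w$ all vanish at $\xi=0$ and $\xi=L_0$.

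Applying the comparison principle on $(0,L_0)\times(0,t_0]$ for each $t_0>0$ — where the coefficients of \eqref{eq_w} are continuous, hence bounded, and the diffusion coefficient $DL_0^2/L(t)^2$ is positive — then yields $\underline w\le w\le\overline w$ for all $t\ge 0$, which is precisely \eqref{eq_w_bounds_Q}. The only point needing a little care is that the zeroth-order coefficient of \eqref{eq_w} is not sign-definite, so the bare weak maximum principle does not apply verbatim; I would handle this in the standard way, via the substitution $v\mapsto e^{-\kappa t}v$ with $\kappa$ chosen large enough that the transformed zeroth-order coefficient is negative on $[0,t_0]$, and then invoke the comparison principle for the transformed equation. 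I expect this bookkeeping to be the only obstacle; the rest is the direct verification sketched above. (As noted in the text, this is Theorem 2.1 of \cite{JA2}, where the full details are given.)
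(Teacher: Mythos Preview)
Your proposal is correct and follows essentially the same approach as the paper: bound the zeroth-order coefficient of \eqref{eq_w} using the definitions of $\overline{Q}$ and $\underline{Q}$, then apply the parabolic comparison principle with the sine-based barriers. The paper's proof is terser (it phrases things as $w$ being sandwiched between solutions of two simpler equations rather than constructing explicit sub/supersolutions of the original one), but the computation and the logic are identical to yours.
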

\begin{proof}
By the definitions of $\overline{Q}(t)$ and $\underline{Q}(t)$, and the equation \eqref{eq_w} satisfied by $w(\xi,t)\geq0$, we have
\begin{equation}
-\frac{\underline{Q}(t)}{2D}w(\xi,t)\leq \frac{\partial w}{\partial t}-D\frac{L_0^2}{L(t)^2}\frac{\partial^2 w}{\partial \xi^2}  \leq \frac{\overline{Q}(t)}{2D}w(\xi,t) \qquad\textrm{in } 0<\xi<L_0.
\end{equation}
By applying the parabolic comparison principle on $[0,L_0]\times[0,t]$, we obtain the lower and upper bounds stated.
\end{proof}

Let $\mu_u=\mu$ denote the principal periodic eigenvalue of the operator
which acts on $u$ in equation \eqref{eq_u_xi}, and let $\mu_w$ denote the principal periodic eigenvalue of the operator which acts on $w$ in equation \eqref{eq_w}.
As in equation \eqref{eq_u_phi_mu} we know that
\begin{equation}\label{eq_u_w_mu}
u(\xi,t)=\overline{\underline{O}} (\phi_u(\xi,t)e^{(f'(0)-\mu_u)t}), \qquad
w(\xi,t)=\overline{\underline{O}} (\phi_w(\xi,t)e^{-\mu_w t})
\end{equation}
where $\phi_u(\xi,t)$ and $\phi_w(\xi,t)$ are the principal periodic eigenfunctions associated with $\mu_u$ and $\mu_w$, and where we use the notation $u_1=\overline{\underline{O}}(u_2)$ to mean that $u_1=O(u_2)$ and $u_2=O(u_1)$.
The relationship between $\mu_u$ and $\mu_w$ is given in the following lemma.
\begin{lemma}\label{lemma_mu_u_w}
Let $\mu_u=\mu$ denote the principal periodic eigenvalue of the operator which acts on $u$ in equation \eqref{eq_u_xi}, and let $\mu_w$ denote the principal periodic eigenvalue of the operator which acts on $w$ in equation \eqref{eq_w}. Then
\begin{equation}\label{eq_mu_u_w}
\mu_u = \mu_w+\frac{1}{T}\int_0^T \frac{\dot{A}(t)^2}{4D}dt.
\end{equation}
\end{lemma}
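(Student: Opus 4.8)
The plan is to transform the principal periodic eigenfunction $\phi_u$ of the $u$-operator directly by the substitution that takes \eqref{eq_u_xi} to \eqref{eq_w}, and then to separate out the non-periodic part of $H$. Write $\mathcal{L}_u$ and $\mathcal{L}_w$ for the operators acting on $u$ in \eqref{eq_u_xi} and on $w$ in \eqref{eq_w}, so that $\phi_u$ satisfies $\partial_t\phi_u-\mathcal{L}_u\phi_u=\mu_u\phi_u$ and $\phi_w$ satisfies $\partial_t\phi_w-\mathcal{L}_w\phi_w=\mu_w\phi_w$, each with zero Dirichlet data, positivity inside $\Omega_0$, and $T$-periodicity (the $e^{-f'(0)t}$ in the definition of $w$ is what absorbs the reaction term, which is why $\mathcal{L}_w$ carries a zeroth-order term and $\mathcal{L}_u$ does not). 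First I would set $u(\xi,t)=\phi_u(\xi,t)e^{(f'(0)-\mu_u)t}$, which is a positive solution of the linear version of \eqref{eq_u_xi}; by the computation of \cite{JA1}, \cite{JA2} that produced \eqref{eq_w}, the function $w=uHe^{-f'(0)t}=\phi_u(\xi,t)H(\xi,t)e^{-\mu_u t}$, with $H$ as in \eqref{eq_w_uHe}, is then a positive solution of \eqref{eq_w}, \eqref{eq_w_BC}.

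Second, I would isolate the secular part of $H$. Since $\dot A(t)^2$ is $T$-periodic, the function $R(t):=\int_0^t\frac{\dot A(\zeta)^2}{4D}\,d\zeta-mt$, where $m:=\frac1T\int_0^T\frac{\dot A(t)^2}{4D}\,dt$, is $T$-periodic with $R(0)=0$. Writing $H(\xi,t)=e^{mt}\hat H(\xi,t)$ with
\begin{equation}\nonumber
\hat H(\xi,t)=\left(\frac{L(t)}{L(0)}\right)^{1/2}\exp\left(R(t)+\frac{\xi^2\dot L(t)L(t)}{4DL_0^2}+\frac{\xi\dot A(t)L(t)}{2DL_0}\right),
\end{equation}
the factor $\hat H$ is $T$-periodic in $t$ (because $L$, $\dot L$, $\dot A$, $R$ all are) and strictly positive on $\overline{\Omega_0}$. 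Hence $w(\xi,t)=\Phi(\xi,t)e^{(m-\mu_u)t}$ with $\Phi:=\phi_u\hat H$.

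Third, I would verify that $\Phi$ is a principal periodic eigenfunction for $\mathcal{L}_w$: it is $T$-periodic as a product of $T$-periodic functions; it is positive in $\Omega_0$ and vanishes on $\partial\Omega_0$, since $\phi_u$ has these properties and $\hat H>0$; and substituting $w=\Phi e^{(m-\mu_u)t}$ into \eqref{eq_w} and cancelling the exponential gives $\partial_t\Phi-\mathcal{L}_w\Phi=(\mu_u-m)\Phi$. By the uniqueness statement in \cite[Theorem 1]{CasLaz}, the principal periodic eigenvalue is the only real number admitting such a positive periodic eigenfunction, so $\mu_w=\mu_u-m$, which is exactly \eqref{eq_mu_u_w}. (Alternatively, one could compare the growth rates in \eqref{eq_u_w_mu}, using that $\Phi$ and $\phi_w$ are bounded above and below uniformly in $t$ by periodicity.)

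The step I expect to be the main obstacle — really the only point beyond bookkeeping — is establishing the clean splitting $H=e^{mt}\hat H$ with $\hat H$ genuinely $T$-periodic. This rests on the identity $\int_t^{t+T}\frac{\dot A(\zeta)^2}{4D}\,d\zeta=mT$, i.e. on $\dot A^2$ inheriting the period of the domain, together with the observation that every other factor in \eqref{eq_w_uHe} (the ratio $L(t)/L(0)$ and the terms built from $\dot L(t)L(t)$ and $\dot A(t)L(t)$) is $T$-periodic in $t$. Once that is in place, the conclusion follows from the change of variables already used for \eqref{eq_w} together with Castro and Lazer's uniqueness theorem.
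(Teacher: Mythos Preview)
Your proposal is correct and rests on the same observation the paper uses: the only non-periodic factor in $H(\xi,t)$ is $\exp\bigl(\int_0^t\frac{\dot A(\zeta)^2}{4D}\,d\zeta\bigr)=e^{mt}\times(\text{$T$-periodic})$, so the change of variables shifts the exponential rate by exactly $m=\frac{1}{T}\int_0^T\frac{\dot A(t)^2}{4D}\,dt$. The only difference is in packaging: you construct $\Phi=\phi_u\hat H$ explicitly as a positive periodic Dirichlet eigenfunction for the $w$-operator and invoke Castro--Lazer uniqueness, whereas the paper argues via the growth-rate comparison \eqref{eq_u_w_mu} in the $\overline{\underline{O}}$ notation---the very alternative you mention in your final parenthetical.
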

\begin{proof}
Consider the function $H(\xi,t)$ given by \eqref{eq_w_uHe}, which occurs in the change of variables from $u$ to $w$. Since $L(t)>0$ and $A(t)$ are both periodic, note that 
\begin{equation}
\left(\frac{L(t)}{L(0)}\right)^{1/2}\exp{\left(\frac{\xi^2 \dot{L}(t)L(t)}{4DL_0^2}+\frac{\xi\dot{A}(t)L(t)}{2DL_0}\right)} = \overline{\underline{O}}(1)
\end{equation}
in the sense that the left-hand side has a finite upper bound a positive lower bound, uniformly in $t\geq0$, $0\leq\xi\leq L_0$. Therefore from the change of variables and the periodicity of $\dot{A}(t)$, we have
\begin{equation}\label{eq_u_w_periodic}
u(\xi,t)= \overline{\underline{O}}\left( w(\xi,t)e^{f'(0)t}\exp\left( -\int\limits_0^t \frac{\dot{A}(\zeta)^2}{4D} d\zeta\right) \right) =\overline{\underline{O}}\left( w(\xi,t)\exp\left(f'(0)t -\frac{t}{T} \int\limits_0^T \frac{\dot{A}(\zeta)^2}{4D} d\zeta\right) \right).
\end{equation}
The claimed relationship \eqref{eq_mu_u_w} then follows by combining \eqref{eq_u_w_periodic} with \eqref{eq_u_w_mu}.
\end{proof}

We also derive upper and lower bounds on $\mu_w$ and hence, via equation \eqref{eq_mu_u_w}, upper and lower bounds on $\mu_u$.
\begin{theorem}\label{theorem_mu_bounds2}
Let $\mu=\mu_u$ denote the principal periodic eigenvalue of the operator
which acts on $u$ in equation \eqref{eq_u_xi}. Let $\overline{Q}(t)$, $\underline{Q}(t)$ be given by equation \eqref{eq_Q_def}. Then
\begin{equation}\label{eq_mu_bounds_Q_u}
\frac{1}{T}\int_0^T \left(\frac{D\pi^2}{L(t)^2} +\frac{\dot{A}(t)^2}{4D} -\frac{\overline{Q}(t)}{2D}\right) dt \leq\mu \leq \frac{1}{T}\int_0^T \left(\frac{D\pi^2}{L(t)^2}+\frac{\dot{A}(t)^2}{4D}+\frac{\underline{Q}(t)}{2D}\right) dt.
\end{equation}
\end{theorem}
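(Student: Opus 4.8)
The plan is to first extract two-sided bounds on $\mu_w$ from the explicit solution estimates in Theorem \ref{theorem_Q}, and then transfer them to $\mu=\mu_u$ using the identity of Lemma \ref{lemma_mu_u_w}. The link between the solution bounds and the eigenvalue is the asymptotic characterisation in \eqref{eq_u_w_mu}: a solution $w$ of \eqref{eq_w}, \eqref{eq_w_BC} whose initial data is comparable to the eigenfunction at $t=0$ satisfies $w(\xi,t)=\overline{\underline{O}}(\phi_w(\xi,t)e^{-\mu_w t})$, so its exponential decay rate (at any fixed interior point) equals $\mu_w$.

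First I would take the particular solution of \eqref{eq_w}, \eqref{eq_w_BC} with initial data $w(\xi,0)=\sin(\pi\xi/L_0)$. Since $\phi_w(\cdot,0)$ is smooth, positive on $(0,L_0)$ and vanishes at the endpoints, Hopf's lemma gives $\partial_\xi\phi_w(0,0)>0>\partial_\xi\phi_w(L_0,0)$, so $\phi_w(\xi,0)$ is bounded above and below by positive multiples of $\sin(\pi\xi/L_0)$; hence $w(\xi,0)=\overline{\underline{O}}(\phi_w(\xi,0))$ and, by the comparison principle exactly as in \eqref{eq_u_w_mu}, $w(\xi,t)=\overline{\underline{O}}(\phi_w(\xi,t)e^{-\mu_w t})$ for all $t\ge 0$. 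This same $w$ also obeys the two-sided bound \eqref{eq_w_bounds_Q} of Theorem \ref{theorem_Q} for suitable $0<C_1\le C_2$. Now fix an interior point, say $\xi_0=L_0/2$; since $\phi_w$ is continuous, positive and $T$-periodic in $t$, the value $\phi_w(\xi_0,t)$ is bounded between two positive constants uniformly in $t$. Comparing the lower bound in \eqref{eq_w_bounds_Q} at $\xi_0$ with $w(\xi_0,t)\le c_2\,\phi_w(\xi_0,t)e^{-\mu_w t}$, then taking logarithms, dividing by $t$, and letting $t\to\infty$ — using that $\tfrac{D\pi^2}{L(\cdot)^2}$ and $\underline{Q}(\cdot)$ are $T$-periodic, so $\int_0^t(\cdot)\,d\zeta=\tfrac{t}{T}\int_0^T(\cdot)\,d\zeta+O(1)$ — yields
\[
\mu_w\le \frac{1}{T}\int_0^T\left(\frac{D\pi^2}{L(t)^2}+\frac{\underline{Q}(t)}{2D}\right)dt.
\]
Symmetrically, comparing the upper bound in \eqref{eq_w_bounds_Q} with $w(\xi_0,t)\ge c_1\,\phi_w(\xi_0,t)e^{-\mu_w t}$ gives
\[
\mu_w\ge \frac{1}{T}\int_0^T\left(\frac{D\pi^2}{L(t)^2}-\frac{\overline{Q}(t)}{2D}\right)dt.
\]
Substituting these two inequalities into $\mu_u=\mu_w+\frac{1}{T}\int_0^T\frac{\dot A(t)^2}{4D}dt$ from Lemma \ref{lemma_mu_u_w} produces precisely \eqref{eq_mu_bounds_Q_u}.

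The only genuinely delicate point is the first step: justifying that the exponential rate of a single non-negative solution $w$ pins down $\mu_w$, i.e. that $\mu_w=-\lim_{t\to\infty}\tfrac{1}{t}\log w(\xi_0,t)$. This relies on the Castro–Lazer existence/uniqueness theorem, which guarantees a positive, $T$-periodic eigenfunction $\phi_w$ unique up to scaling: periodicity and positivity make $\phi_w(\xi_0,\cdot)$ comparable to a constant, and Hopf's lemma makes $\phi_w(\cdot,0)$ comparable to $\sin(\pi\xi/L_0)$, so the comparison principle forces $w(\xi,t)=\overline{\underline{O}}(\phi_w(\xi,t)e^{-\mu_w t})$. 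Once this is in hand, the remainder is routine $O(1)$ bookkeeping from the periodicity of the integrands, and the passage from $\mu_w$ to $\mu_u$ is immediate.
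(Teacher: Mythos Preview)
Your proof is correct and follows essentially the same approach as the paper: derive bounds on $\mu_w$ by comparing the explicit solution estimates \eqref{eq_w_bounds_Q} from Theorem~\ref{theorem_Q} with the asymptotic characterisation \eqref{eq_u_w_mu}, then apply Lemma~\ref{lemma_mu_u_w}. The paper's proof is a two-line sketch (``by comparing \eqref{eq_u_w_mu} with \eqref{eq_w_bounds_Q}\ldots''), and what you have written is precisely the careful expansion of that comparison --- the Hopf's lemma argument for comparability at $t=0$, the fixed interior point, the $\log$-divide-by-$t$ extraction of the rate, and the $O(1)$ periodicity bookkeeping.
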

\begin{proof}
By comparing \eqref{eq_u_w_mu} with \eqref{eq_w_bounds_Q} it follows that
\begin{equation}
\frac{1}{T}\int_0^T \left(\frac{D\pi^2}{L(t)^2} -\frac{\overline{Q}(t)}{2D}\right) dt \leq\mu_w \leq \frac{1}{T}\int_0^T \left(\frac{D\pi^2}{L(t)^2}+\frac{\underline{Q}(t)}{2D}\right) dt.
\end{equation}
The result then follows immediately by combining this with Lemma \ref{lemma_mu_u_w}.
\end{proof}
\begin{remark}
If we were instead interested in the principal periodic eigenvalue $\mu_V$ of an operator
$\frac{\partial}{\partial t}-D\nabla^2+V(x,t)$ on $\Omega(t)$, where $V$ was a continuous function on $\Omega(t)$ and periodic in $t$ with the same period as the domain, then a similar result could be derived provided that $\overline{Q}(t)$ and $\underline{Q}(t)$ were adjusted appropriately to also include the terms from $V$.
\end{remark}
\begin{example} \label{example_periodic_L_om_eps}
Let $L(t)=L_0(1+\varepsilon\sin(\omega t))$ with $\omega>0$, $0<\varepsilon<1$, and consider the domain $\Omega(t)=(0,L(t))$ which has period $T=\frac{2\pi}{\omega}$. Let $\mu=\mu_u$ be the principal periodic eigenvalue of $\Omega(t)$. We shall apply Theorems \ref{theorem_mu_bounds1a}, \ref{theorem_mu_bounds1b} and \ref{theorem_mu_bounds2} to give some bounds on $\mu$. First we must consider
\begin{equation}\label{eq_s(t)_L_om_eps}
s(t):=\int\limits_0^t \frac{L_0^2}{L(\zeta)^2}d\zeta=\int\limits_0^t \frac{1}{(1+\varepsilon\sin(\omega \zeta))^2}d\zeta.
\end{equation}
This integral \eqref{eq_s(t)_L_om_eps} can be calculated exactly. For $-\frac{\pi}{\omega}< t<\frac{\pi}{\omega}$,
\begin{align}
s(t)=&\frac{2}{\omega(1-\varepsilon^2)^{3/2}}\left(\arctan\left(\frac{\tan(\frac{\omega t}{2})+\varepsilon}{\sqrt{1-\varepsilon^2}}\right) - \arctan\left(\frac{\varepsilon}{\sqrt{1-\varepsilon^2}}\right)\right) -\frac{2\varepsilon}{\omega(1-\varepsilon^2)}  \nonumber \\
&+\frac{2\varepsilon^2\tan(\frac{\omega t}{2})+2\varepsilon}{\omega(1-\varepsilon^2)\left((\tan(\frac{\omega t}{2})+\varepsilon)^2+1-\varepsilon^2\right)}.
\end{align}
For $t=\pm \frac{\pi}{\omega}$,
\begin{align}
s\left(\pm \frac{\pi}{\omega}\right)=\frac{2}{\omega(1-\varepsilon^2)^{3/2}}\left(\pm\frac{\pi}{2} - \arctan\left(\frac{\varepsilon}{\sqrt{1-\varepsilon^2}}\right)\right) -\frac{2\varepsilon}{\omega(1-\varepsilon^2)},
\end{align}
and for $t> \frac{\pi}{\omega}$ we can use the fact that the integrand of $s(t)$ is periodic. Note that at $t=\frac{2\pi}{\omega}$,
\begin{equation}
s\left(\frac{2\pi}{\omega}\right)=\frac{2\pi}{\omega}\frac{1}{ (1-\varepsilon^2)^{3/2}}.
\end{equation}
By the periodicity, it follows that
\begin{equation}\label{eq_s(t)_periodic}
s(t)=\int\limits_0^t \frac{L_0^2}{L(\zeta)^2}d\zeta=\frac{t}{(1-\varepsilon^2)^{3/2}}+O(1) \qquad\textrm{as }t\rightarrow\infty .
\end{equation}
Therefore we conclude from Theorems \ref{theorem_mu_bounds1a} and \ref{theorem_mu_bounds1b} that
\begin{equation}\label{eq_lower}
\frac{D\pi^2}{L_0^2(1-\varepsilon^2)^{3/2}} \leq \mu\leq \frac{D\pi^2}{L_0^2(1-\varepsilon)^2}.
\end{equation}
This means that, regardless of the frequency $\omega$, the solution to the linear version of \eqref{eq_psi}, \eqref{eq_psi_BC}, will tend to zero if
$f'(0)<\frac{D\pi^2}{L_0^2(1-\varepsilon^2)^{3/2}}$, or tend to infinity if $f'(0)> \frac{D\pi^2}{L_0^2(1-\varepsilon)^2}$.
 
To apply Theorem \ref{theorem_mu_bounds2}, we calculate the $\frac{2\pi}{\omega}$-periodic functions $\overline{Q}(t)$ and $\underline{Q}(t)$ as defined in \eqref{eq_Q_def}:
\begin{equation}
\overline{Q}(t)=\begin{cases} 0 &\textrm{for }0\leq t\leq \frac{\pi}{\omega}\\
\ds{-\frac{L_0^2\varepsilon\omega^2}{2}\sin(\omega t)(1+\varepsilon\sin(\omega t))}&\textrm{for }\frac{\pi}{\omega}\leq t\leq \frac{2\pi}{\omega}\\
\end{cases}
\end{equation}
\begin{equation}
\underline{Q}(t)=\begin{cases}\ds{\frac{L_0^2\varepsilon\omega^2}{2}\sin(\omega t)(1+\varepsilon\sin(\omega t))}&\textrm{for } 0\leq t\leq \frac{\pi}{\omega}\\
0&\textrm{for }\frac{\pi}{\omega}\leq t\leq \frac{2\pi}{\omega}\\
\end{cases}
\end{equation}
and therefore
\begin{equation}
\int_0^{\frac{2\pi}{\omega}} \frac{\overline{Q}(\zeta)}{2D}d\zeta = \frac{L_0^2\varepsilon\omega}{2D}\left(1-\frac{\varepsilon\pi}{4}\right)
\qquad\textrm{and}\qquad
\int_0^{\frac{2\pi}{\omega}} \frac{\underline{Q}(\zeta)}{2D}d\zeta = \frac{L_0^2\varepsilon\omega}{2D}\left(1+\frac{\varepsilon\pi}{4}\right).
\end{equation}
By equation \eqref{eq_mu_bounds_Q_u} together with \eqref{eq_s(t)_periodic} we deduce that
\begin{equation}\label{eq_ex1_Q}
\frac{D\pi^2}{L_0^2(1-\varepsilon^2)^{3/2}} -\frac{L_0^2\varepsilon\omega^2}{4\pi D}\left(1-\frac{\varepsilon\pi}{4}\right) \leq \mu\leq 
\frac{D\pi^2}{L_0^2(1-\varepsilon^2)^{3/2}}+ \frac{L_0^2\varepsilon\omega^2}{4\pi D}\left(1+\frac{\varepsilon\pi}{4}\right),
\end{equation}
although we note that in this case the lower bound in \eqref{eq_lower} is better.
\end{example}
\begin{remark}
Let $\mu(\omega)$ be the principal periodic eigenvalue of $\Omega(t)=(0,L(t))$ with $L(t)$ as in Example \ref{example_periodic_L_om_eps} for some fixed $\varepsilon \in(0,1)$. The bounds \eqref{eq_ex1_Q} imply that
\begin{equation}\label{eq_remark_om1}
\mu(\omega)= \frac{D\pi^2}{L_0^2(1-\varepsilon^2)^{3/2}}+O(\omega^2)=\frac{1}{T}\int\limits_0^T\frac{D\pi^2}{L(t)^2}dt+O(\omega^2) \quad \textrm{ as }\omega \rightarrow 0.
\end{equation}
In fact, for any interval of the form $A(t)<x<A(t)+L(t)$ where $A(t)=A_1 \left(\frac{\omega t}{2\pi}\right)$ and $L(t)=L_1\left(\frac{\omega t}{2\pi}\right)$ for some smooth and $1$-periodic functions $L_1>0$ and $A_1$, we can use \eqref{eq_mu_bounds_Q_u} to deduce the limit as $\omega \rightarrow 0$. Indeed since $\dot{A}^2$, $\underline{Q}$ and $\overline{Q}$ (as defined in \eqref{eq_Q_def}) are all $O(\omega^2)$ as $\omega\rightarrow 0$, we can conclude from \eqref{eq_mu_bounds_Q_u} that
\begin{equation}\label{eq_remark_om2}
\mu(\omega) =\int_0^1 \frac{D\pi^2}{L_1(s)^2}ds + O(\omega^2) \qquad \textrm{as }\omega\rightarrow 0 .
\end{equation}
\end{remark}

In the next section we shall see that the limit $\lim_{\omega\rightarrow0} \mu(\omega)$ from \eqref{eq_remark_om1} and \eqref{eq_remark_om2} is an instance of a more general property, which is valid for $\frac{2\pi}{\omega}$-periodic domains $\Omega(t)$ in any dimension, as $\omega \rightarrow 0$.
\section{Dependence of $\mu$ on the frequency $\omega$}\label{section_mu_omega}
\subsection{Converting to a $1$-periodic problem}
In this section we consider the principal periodic eigenvalue $\mu$ as a function of the frequency $\omega=\frac{2\pi}{T}$. We consider a $1$-periodic domain $\tilde{\Omega}(s)$ and let $\mu=\mu(\omega)$ be the principal periodic eigenvalue associated with the domain
\begin{equation}
\Omega(t)=\tilde{\Omega}\left(\frac{\omega t}{2\pi}\right).
\end{equation}
Note that the map $h(\cdot ,t):\Omega(t)\rightarrow \Omega_0$ which we used in the change of variables can now be expressed as $h(\cdot ,t)=\tilde{h}(\cdot ,\frac{\omega t}{2\pi})$, for a $1$-periodic map $\tilde{h}(\cdot ,s): \tilde{\Omega}(s)\rightarrow \Omega_0$.
If we change variables from $t$ to $s=\frac{\omega t}{2\pi}$ in \eqref{eq_u_ij}, \eqref{eq_u_ij_BC}, then the operator $\frac{\partial}{\partial t}- \mathcal{L}(\xi,t)$ becomes an operator of the form $\frac{\omega}{2\pi}\frac{\partial }{\partial s}- \mathcal{L}_{\omega}(\xi,s)$
where
\begin{equation}\label{eq_Lomega}
\mathcal{L}_{\omega}(\xi,s) = \sum_{i,j}\tilde{a}_{ij}(\xi,s)\frac{\partial^2 u}{\partial \xi_i\partial \xi_j} +\sum_j \left(\frac{\omega}{2\pi}\tilde{b}_j(\xi,s) +\tilde{c}_j(\xi,s)\right)\frac{\partial}{\partial \xi_j},
\end{equation}
\begin{equation}\label{eq_aij_bj_cj_s}
\tilde{a}_{ij}(\xi,s)= \sum_{k}D\left(\frac{\partial \tilde{h}_i}{\partial x_k}\frac{\partial \tilde{h}_j}{\partial x_k}\right), \qquad
\tilde{b}_j(\xi,s)= -\frac{\partial \tilde{h}_j}{\partial s}, \qquad
\tilde{c}_j(\xi,s)=D\nabla^2 \tilde{h}_j.
\end{equation}
So the principal periodic eigenvalue of $\Omega(t)=\tilde{\Omega}\left(\frac{\omega t}{2\pi}\right)$ is the same as the principal periodic eigenvalue $\mu(\omega)$ of the problem
\begin{equation}\label{eq_omega_s1}
\frac{\omega}{2\pi}\frac{\partial \phi}{\partial s}- \mathcal{L}_{\omega}(\xi,s)\phi = \mu(\omega)\phi(\xi,s) \qquad \xi\in\Omega_0, s\in[0,1]
\end{equation}
\begin{equation}\label{eq_omega_s2}
\phi(\xi,s)=0 \qquad \xi\in\partial\Omega_0, s\in[0,1]
\end{equation}
\begin{equation}\label{eq_omega_s3}
\phi(\xi,s)\equiv \phi(\xi,s+1) \qquad \xi\in\Omega_0.
\end{equation}
For any $\omega>0$, the coefficients of $\mathcal{L}_{\omega}(\xi,s)$ are $1$-periodic in $s$. However, the term $\frac{\omega}{2\pi} \tilde{b}_j(\xi,s) \frac{\partial}{\partial \xi_j}$ in \eqref{eq_Lomega} still depends on, and scales with, $\omega$. The source of this term is the coefficient involving $\frac{\partial h_j}{\partial t}=\frac{\omega}{2\pi}\frac{\partial \tilde{h}_j}{\partial s}$ which we get when we transform the time-dependent domain $\tilde{\Omega}\left(\frac{\omega t}{2\pi}\right)$ into the fixed reference domain $\Omega_0$.

In the paper \cite{LiuLouPengZho}, Liu, Lou, Peng and Zhou consider parabolic equations with periodic coefficients, and they investigate how the principal periodic eigenvalue varies with respect to the frequency. However, the coefficients in their equation are independent of the frequency $\omega$ except where it appears inside the periodic functions as $\omega t$. Therefore, after the change of time variables to give an operator with $1$-periodic coefficients, the problems they consider have the form
\begin{equation}\label{eq_Peng1}
\frac{\omega}{2\pi}\frac{\partial \hat{\phi}}{\partial s}- \mathcal{\hat{L}}(\xi,s)\hat{\phi} = \hat{\lambda}(\omega)\hat{\phi}(\xi,s) \qquad \xi\in\Omega_0, s\in[0,1]
\end{equation}
\begin{equation}\label{eq_Peng2}
\hat{\phi}(\xi,s)=0 \qquad \xi\in\partial\Omega_0, s\in[0,1]
\end{equation}
\begin{equation}\label{eq_Peng3}
\hat{\phi}(\xi,s)\equiv \hat{\phi}(\xi,s+1) \qquad \xi\in\Omega_0,
\end{equation}
where the coefficients of the operator $\mathcal{\hat{L}}(\xi,s)$ are $1$-periodic in $s$ and do not depend on $\omega$. We shall use methods from \cite{LiuLouPengZho} in the proofs of Theorem \ref{theorem_om_tends_0} and Theorem \ref{theorem_monotonicity_omega_new}, but the methods have to be adapted, since the operator $\mathcal{L}_{\omega}(\xi,s)$ depends on $\omega$ through the $\frac{\omega}{2\pi} \tilde{b}_j(\xi,s) \frac{\partial}{\partial \xi_j}$ term.

\subsection{Asymptotic behaviour of $\mu(\omega)$ as $\omega\rightarrow0$}
In this section, we consider the limit of $\mu(\omega)$ as $\omega\rightarrow0$. The proof of Theorem \ref{theorem_om_tends_0} is essentially the same as that used in \cite[Theorem 1.3(i)]{LiuLouPengZho} to prove equation \eqref{eq_lambda_om0}, however a slight generalisation is needed to allow for the $\omega$ dependence of the coefficients in $-\mathcal{L}_{\omega}(\xi,s)$. For completeness, we give the proof here.
\begin{theorem}\label{theorem_om_tends_0}
Let $\Omega_0$ be a smooth bounded domain, and for each $s \in[0,1]$ and $\omega\geq0$ let $-\mathcal{L}_{\omega}(\xi,s)$ be as defined in equation \eqref{eq_Lomega}. Assume the coefficients $\tilde{a}_{ij}$, $\tilde{b}_j$, $\tilde{c}_j$ belong to $C^{1+\alpha, 1+\alpha}(\overline{\Omega_0}\times[0,1])$ for some $\alpha>0$.

For $s \in[0,1]$ and $\omega\geq0$, let $\lambda^{0}(s,\omega)$ be the principal eigenvalue of the elliptic operator $-\mathcal{L}_{\omega}(\xi,s)$ on $\Omega_0$, with zero Dirichlet conditions on $\partial\Omega_0$. For $\omega>0$, let $\mu(\omega)$ be the principal periodic eigenvalue of \eqref{eq_omega_s1}, \eqref{eq_omega_s2}, \eqref{eq_omega_s3}. Then
\begin{equation}\label{eq_lambda0_omega}
\lim_{\omega\rightarrow 0}\mu(\omega)= \int_0^1 \lambda^{0}(s,0)ds.
\end{equation}
\end{theorem}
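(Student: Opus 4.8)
Following \cite[Theorem 1.3(i)]{LiuLouPengZho}, the plan is to show that $\mu(\omega)$ lies within $O(\omega)$ of $\overline{\lambda}(\omega):=\int_0^1\lambda^0(s,\omega)\,ds$, and then to pass to the limit using the fact that $\lambda^0(s,\omega)\to\lambda^0(s,0)$ uniformly in $s\in[0,1]$ as $\omega\to0$. This last fact is standard: the coefficients $\tilde a_{ij}$, $\tfrac{\omega}{2\pi}\tilde b_j+\tilde c_j$ of $-\mathcal{L}_\omega$ depend continuously on $\omega$ (the $\tilde a_{ij}$ not at all), and the ellipticity constant is independent of $\omega$, so the principal Dirichlet eigenvalue of $-\mathcal{L}_\omega(\cdot,s)$ depends continuously on $\omega$, uniformly over the compact $s$-interval.

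For the main estimate I would, for each $s$ and each small $\omega\ge0$, take $\varphi^0(\cdot,s)>0$ to be the principal Dirichlet eigenfunction of $-\mathcal{L}_\omega(\xi,s)$ on $\Omega_0$ (normalised in $L^2$), which by the $C^{1+\alpha,1+\alpha}$ regularity of the coefficients, Schauder estimates and simplicity of the principal eigenvalue is $C^{2+\alpha}$ in $\xi$ and $C^1$ in $s$, with all bounds uniform for $\omega$ near $0$. Setting $G(s):=\tfrac{2\pi}{\omega}\int_0^s\big(\overline{\lambda}(\omega)-\lambda^0(\sigma,\omega)\big)\,d\sigma$, which is $C^1$ and $1$-periodic since its increment over $[0,1]$ vanishes, I put $\overline{\phi}(\xi,s):=\varphi^0(\xi,s)\,e^{G(s)}$. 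Because $\mathcal{L}_\omega$ involves only $\xi$-derivatives, a direct computation gives
\[
\frac{\omega}{2\pi}\frac{\partial\overline{\phi}}{\partial s}-\mathcal{L}_\omega\overline{\phi}=\left(\lambda^0(s,\omega)+\frac{\omega}{2\pi}G'(s)+\frac{\omega}{2\pi}\frac{\partial_s\varphi^0}{\varphi^0}\right)\overline{\phi}=\left(\overline{\lambda}(\omega)+\frac{\omega}{2\pi}\frac{\partial_s\varphi^0}{\varphi^0}\right)\overline{\phi}
\]
by the choice of $G$.

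The crucial point is that $\partial_s\varphi^0/\varphi^0$ is bounded on $\overline{\Omega_0}\times[0,1]$ by a constant $M$ independent of small $\omega$: since $\varphi^0(\cdot,s)$ vanishes on $\partial\Omega_0$ for every $s$, so does $\partial_s\varphi^0(\cdot,s)$, and Hopf's lemma together with compactness give $c_0\,\mathrm{dist}(\cdot,\partial\Omega_0)\le\varphi^0$ and $|\partial_s\varphi^0|\le C_1\,\mathrm{dist}(\cdot,\partial\Omega_0)$ uniformly, whence $|\partial_s\varphi^0/\varphi^0|\le M:=C_1/c_0$. Consequently $\overline{\phi}$ is a positive, $1$-periodic function vanishing on $\partial\Omega_0$ which is at once a subsolution of \eqref{eq_omega_s1} with parameter $\overline{\lambda}(\omega)+\tfrac{\omega M}{2\pi}$ and a supersolution with parameter $\overline{\lambda}(\omega)-\tfrac{\omega M}{2\pi}$. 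Testing the subsolution inequality against the positive principal periodic eigenfunction of the adjoint periodic-parabolic problem (whose principal periodic eigenvalue is also $\mu(\omega)$) and integrating over $\Omega_0\times[0,1]$ — the boundary terms in $\xi$ vanish by the Dirichlet condition, those in $s$ by periodicity — forces $\mu(\omega)\le\overline{\lambda}(\omega)+\tfrac{\omega M}{2\pi}$, and the analogous test of the supersolution inequality forces $\mu(\omega)\ge\overline{\lambda}(\omega)-\tfrac{\omega M}{2\pi}$; the requisite positive eigenfunctions are supplied by \cite[Theorem 1]{CasLaz}. Letting $\omega\to0$ and using $\overline{\lambda}(\omega)\to\int_0^1\lambda^0(s,0)\,ds$ then gives \eqref{eq_lambda0_omega}.

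I expect the main obstacle to be the uniform boundedness of $\partial_s\varphi^0/\varphi^0$ up to $\partial\Omega_0$ — a pointwise bound that is finite only because numerator and denominator both vanish to exactly first order there — together with the uniform-in-$\omega$ regularity and Hopf-type lower bounds for the instantaneous eigenfunction. The $C^{1+\alpha,1+\alpha}$ assumption on the coefficients and the $\omega$-independence of the ellipticity constant are precisely what make these estimates available; everything else (the choice of the periodic corrector $G$, the sub/supersolution comparison, and the limit $\lambda^0(s,\omega)\to\lambda^0(s,0)$) is then routine.
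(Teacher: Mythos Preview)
Your proposal is correct and follows essentially the same route as the paper: the test function $\overline{\phi}=\varphi^0 e^{G}$ is exactly the paper's $\phi^0\rho_\omega$, the crucial uniform bound on $\partial_s\varphi^0/\varphi^0$ via Hopf's lemma is the same key technical step, and your adjoint-testing argument is an equivalent substitute for the paper's citation of \cite[Proposition 2.1]{PengZhao}. The only cosmetic difference is that you state the estimate as $|\mu(\omega)-\overline{\lambda}(\omega)|\le \tfrac{\omega M}{2\pi}$ with an explicit constant, while the paper phrases it as an $\varepsilon$--$\omega_\varepsilon$ statement.
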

\begin{proof}
For each $s \in[0,1]$ and $\omega\geq0$, let $\lambda^{0}(s,\omega)$ and $\phi^{0}(\xi; s,\omega)$ be the principal eigenvalue and eigenfunction of the elliptic operator $-\mathcal{L}_{\omega}(\xi,s)$ on $\Omega_0$, with zero Dirichlet conditions on $\partial\Omega_0$, and normalised to $\vert\vert \phi^{0}(\cdot \ ; s,\omega) \vert\vert_{L^2(\Omega_0)} =1$. As in \cite{LiuLouPengZho}, note that for any $\xi\in\overline{\Omega_0}$ and $\omega\geq 0$, both $\phi^{0}(\xi;s,\omega)$ and $\nabla\phi^{0}(\xi;s,\omega)$ are $C^1$ and $1$-periodic in $s$. We also note here that they, and $\lambda^{0}(s,\omega)$, depend continuously on $\omega$.

For $\omega>0$, define
\begin{equation}\label{eq_overline_phi}
\overline{\phi}_{\omega}(\xi,s) = \phi^{0}(\xi; s,\omega) \rho_{\omega}(s)
\end{equation}
where
\begin{equation}
\rho_{\omega}(s)=\exp\left( \frac{2\pi}{\omega}\left( s \int_0^1 \lambda^{0}(\tau,\omega) d\tau -  \int_0^s \lambda^{0}(\tau,\omega) d\tau \right)\right).
\end{equation}
Note that $\rho_{\omega}>0$, $\rho_{\omega}$ is periodic with period $1$, and satisfies
\begin{equation}\label{eq_rho_om}
\frac{\omega}{2\pi} \frac{d \rho_{\omega}}{ds}=\left( \int_0^1 \lambda^{0}(\tau,\omega) d\tau -  \lambda^{0}(s,\omega) \right) \rho_{\omega}(s).
\end{equation}
We shall show that given $\varepsilon>0$, $\omega_{\varepsilon}>0$ can be chosen small enough such that
\begin{equation}\label{eq_ineq_rho_om}
\left( \int_0^1 \lambda^{0}(\tau,\omega) d\tau-\varepsilon\right) \overline{\phi}_{\omega} \leq 
\frac{\omega}{2\pi}\frac{\partial \overline{\phi}_{\omega}}{\partial s}- \mathcal{L}_{\omega}(\xi,s)\overline{\phi}_{\omega} \leq
\left( \int_0^1 \lambda^{0}(\tau,\omega) d\tau+\varepsilon\right) \overline{\phi}_{\omega}
\end{equation}
for all $0<\omega\leq \omega_{\varepsilon}$.
Then since $\overline{\phi}_{\omega}$ is positive, $1$-periodic in $s$, and satisfies the Dirichlet boundary conditions on $\partial\Omega_0$, it follows from
\cite[Proposition 2.1]{PengZhao} that
\begin{equation}
\int_0^1 \lambda^{0}(\tau,\omega) d\tau-\varepsilon \leq \mu(\omega) \leq \int_0^1 \lambda^{0}(\tau,\omega) d\tau+\varepsilon  \qquad\textrm{for all }0<\omega\leq \omega_{\varepsilon},
\end{equation}
and so we reach the conclusion
\begin{equation}\label{eq_om_tends_0_lemma}
\lim_{\omega\rightarrow 0} \left( \mu(\omega) - \int_0^1 \lambda^{0}(s,\omega) ds \right) =0.
\end{equation}
Finally, since $\lambda^{0}(s,\omega)$ depends continuously on $\omega$, \eqref{eq_om_tends_0_lemma} implies \eqref{eq_lambda0_omega}.

It remains to show that $\omega_{\varepsilon}>0$ can be chosen such that \eqref{eq_ineq_rho_om} holds for all $0<\omega\leq \omega_{\varepsilon}$. Using \eqref{eq_overline_phi}, \eqref{eq_rho_om}, and the fact that $\phi^{0}(\xi; s,\omega)$ is an eigenfunction of $-\mathcal{L}_{\omega}(\xi,s)$ with eigenvalue $\lambda^{0}(s,\omega)$, we calculate
\begin{align}
\frac{\omega}{2\pi}\frac{\partial \overline{\phi}_{\omega}}{\partial s}- \mathcal{L}_{\omega}(\xi,s)\overline{\phi}_{\omega} &=
\frac{\omega}{2\pi}\frac{\partial \phi^{0}(\xi; s,\omega)}{\partial s} \rho_{\omega} + \frac{\omega}{2\pi} \frac{d \rho_{\omega}}{ds}\phi^{0}(\xi; s,\omega) + \lambda^{0}(s,\omega)\phi^{0}(\xi; s,\omega)\rho_{\omega} \\
&= \left(\frac{\omega}{2\pi}\frac{\partial \phi^{0}(\xi; s,\omega)}{\partial s} + \int_0^1 \lambda^{0}(\tau,\omega) d\tau \ \phi^{0}(\xi; s,\omega) \right) \rho_{\omega}.
\end{align}
Therefore \eqref{eq_ineq_rho_om} will hold provided that we can choose $\omega_{\varepsilon}>0$ such that
\begin{equation}\label{eq_ineq_rho_om2}
\frac{\omega}{2\pi}\left\vert \frac{\partial \phi^{0}(\xi; s,\omega)}{\partial s} \right\vert \leq \varepsilon \phi^{0}(\xi; s,\omega) \qquad\textrm{for all } \xi\in\Omega_0,\ s\in[0,1],\ 0<\omega\leq \omega_{\varepsilon}.
\end{equation}
Since $\phi^{0}(\xi; s,\omega)$ is positive in $\Omega_0$, we know that $\frac{\frac{\partial \phi^{0}}{\partial s}(\xi; s,\omega)}{\phi^{0}(\xi; s,\omega)}$ is finite for each $\xi$ in $\Omega_0$. For any point $\xi_0\in\partial \Omega_0$, with outward normal $\nu$, we may consider a sequence $\xi\in\Omega_0$, $\xi \rightarrow \xi_0$ with $\frac{\xi-\xi_0}{\vert \xi-\xi_0\vert}\cdot\nu \nrightarrow0$. Hopf's Lemma implies that $\nabla\phi^{0}(\xi_0; s,\omega)\cdot \nu \neq 0$, and then by l'H{\^{o}}pital's rule, we have
\begin{equation}\label{eq_lHopital}
\lim_{\xi \rightarrow \xi_0} \frac{\frac{\partial \phi^{0}}{\partial s}(\xi; s,\omega)}{\phi^{0}(\xi; s,\omega)} =\frac{\nabla\frac{\partial \phi^{0}}{\partial s}(\xi_0; s,\omega)\cdot \nu}{\nabla\phi^{0}(\xi_0; s,\omega)\cdot \nu}= O(1).
\end{equation}
By continuity, and by the compactness of $\overline{\Omega_0}\times[0,1]\times[0,1]$, we find that 
$\frac{\frac{\partial \phi^{0}}{\partial s}(\xi; s,\omega)}{\phi^{0}(\xi; s,\omega)}$ is bounded uniformly with respect to $(\xi_0,s,\omega)\in \Omega_0\times[0,1]\times[0,1]$. Therefore $\omega_{\varepsilon}>0$ can be chosen to satisfy \eqref{eq_ineq_rho_om2}.
\end{proof}

\begin{corollary}\label{corollary_omega_tends_0}
Let $\tilde{\Omega}(s)$ be a smooth bounded domain which varies smoothly and $1$-periodically with $s$, and let $\mu(\omega)$ be the principal periodic eigenvalue of $\Omega(t)=\tilde{\Omega}\left(\frac{\omega t}{2\pi}\right)$. For each $0\leq s\leq 1$, let $\lambda (\tilde{\Omega}(s))$ be the principal Dirichlet eigenvalue of $-\nabla^2$ on $\tilde{\Omega}(s)$. Then
\begin{equation}\label{eq_omega_tends_0}
\lim_{\omega\rightarrow 0}\mu(\omega) =\int_0^1 D\lambda(\tilde{\Omega}(s))ds.
\end{equation}
\end{corollary}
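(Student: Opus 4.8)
The plan is to deduce the corollary from Theorem~\ref{theorem_om_tends_0} by identifying the quantity $\lambda^{0}(s,0)$ that appears there with $D\lambda(\tilde{\Omega}(s))$ for each $s$, and then integrating.

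First I would observe that when $\omega=0$ the first-order term $\frac{\omega}{2\pi}\tilde{b}_j(\xi,s)\frac{\partial}{\partial\xi_j}$ in \eqref{eq_Lomega} drops out, so that
\[
\mathcal{L}_{0}(\xi,s)u = \sum_{i,j}\tilde{a}_{ij}(\xi,s)\frac{\partial^2 u}{\partial\xi_i\partial\xi_j} + \sum_j \tilde{c}_j(\xi,s)\frac{\partial u}{\partial\xi_j},
\]
with $\tilde{a}_{ij}$, $\tilde{c}_j$ given by \eqref{eq_aij_bj_cj_s}. These are precisely the coefficients produced by the spatial change of variables $\xi=\tilde{h}(x,s)$ with $s$ held fixed: this is the same computation underlying \eqref{eq_L_aij_bj_cj}--\eqref{eq_aij_bj_cj}, but with the term involving $\partial h_j/\partial t$ absent because $s$ is frozen. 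Concretely, by the chain rule, if $v$ is $C^2$ on $\tilde{\Omega}(s)$ and $u(\xi):=v(x)$ for $\xi=\tilde{h}(x,s)$, then $(D\nabla^2 v)(x) = (\mathcal{L}_{0}(\cdot,s)u)(\tilde{h}(x,s))$; in other words $-\mathcal{L}_{0}(\cdot,s)$ on $\Omega_0$ is the pushforward of $-D\nabla^2$ on $\tilde{\Omega}(s)$ under the diffeomorphism $\tilde{h}(\cdot,s)$.

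Next I would use this to compare the two Dirichlet eigenvalue problems at each fixed $s\in[0,1]$. Let $(D\lambda(\tilde{\Omega}(s)),v_s)$ be the principal eigenpair of $-D\nabla^2$ on $\tilde{\Omega}(s)$, so that $-D\nabla^2 v_s = D\lambda(\tilde{\Omega}(s))v_s$ in $\tilde{\Omega}(s)$, with $v_s=0$ on $\partial\tilde{\Omega}(s)$ and $v_s>0$ in $\tilde{\Omega}(s)$. Setting $u_s(\xi):=v_s(x)$ for $\xi=\tilde{h}(x,s)$ and applying the identity above, $u_s$ solves $-\mathcal{L}_{0}(\xi,s)u_s = D\lambda(\tilde{\Omega}(s))u_s$ in $\Omega_0$; moreover $u_s=0$ on $\partial\Omega_0$ because $\tilde{h}(\cdot,s)$ carries $\partial\tilde{\Omega}(s)$ to $\partial\Omega_0$, and $u_s>0$ in $\Omega_0$ because $\tilde{h}(\cdot,s)$ is a bijection. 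Since the principal eigenvalue of an elliptic operator is the unique eigenvalue admitting a positive eigenfunction, this forces $\lambda^{0}(s,0)=D\lambda(\tilde{\Omega}(s))$ for every $s\in[0,1]$.

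Finally, the smoothness assumptions on $\tilde{\Omega}(s)$ and on its variation in $s$ ensure that $\tilde{a}_{ij},\tilde{c}_j\in C^{1+\alpha,1+\alpha}(\overline{\Omega_0}\times[0,1])$ for some $\alpha>0$, so Theorem~\ref{theorem_om_tends_0} applies and gives $\lim_{\omega\to 0}\mu(\omega)=\int_0^1\lambda^{0}(s,0)\,ds=\int_0^1 D\lambda(\tilde{\Omega}(s))\,ds$, which is \eqref{eq_omega_tends_0}. I do not expect a serious obstacle; the only step needing care is checking that the change of variables $\tilde{h}(\cdot,s)$ transports the principal Dirichlet eigenpair of $-D\nabla^2$ on $\tilde{\Omega}(s)$ to that of $-\mathcal{L}_{0}(\cdot,s)$ on $\Omega_0$ while preserving both the boundary condition and the positivity of the eigenfunction, and this is immediate from $\tilde{h}(\cdot,s)$ being a diffeomorphism.
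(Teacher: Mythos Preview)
Your proposal is correct and follows essentially the same approach as the paper: you identify $\mathcal{L}_{0}(\xi,s)$ as the pushforward of $D\nabla^2$ under the diffeomorphism $\tilde{h}(\cdot,s)$, deduce $\lambda^{0}(s,0)=D\lambda(\tilde{\Omega}(s))$, and then invoke Theorem~\ref{theorem_om_tends_0}. Your version is slightly more explicit about why the principal eigenpairs correspond (via positivity and uniqueness), but the argument is otherwise the same.
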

\begin{proof}
For each $0\leq s\leq 1$, the change of variables $\tilde{h}$ from $\tilde{\Omega}(s)$ to $\Omega_0$ transforms the operator $D\nabla^2$ on $\tilde{\Omega}(s)$ to $\sum_{i,j,k} D\left(\frac{\partial \tilde{h}_i}{\partial x_k}\frac{\partial \tilde{h}_j}{\partial x_k}\right)\frac{\partial^2 }{\partial \xi_i\partial \xi_j} +\sum_jD\nabla^2 \tilde{h}_j\frac{\partial}{\partial \xi_j}$ on $\Omega_0$. By equations \eqref{eq_Lomega} and \eqref{eq_aij_bj_cj_s}, this is precisely $\mathcal{L}_{0}(\xi,s)$ (i.e. $\mathcal{L}_{\omega}(\xi,s)$ with $\omega=0$).
So we have
\begin{equation}
D\lambda (\tilde{\Omega}(s))=\lambda^{0}(s,0)
\end{equation}
and then \eqref{eq_omega_tends_0} is equivalent to \eqref{eq_lambda0_omega}.
\end{proof}
\begin{remark}
Recall that $\int_0^1 D\lambda(\tilde{\Omega}(s))ds$ is also a lower bound for $\mu(\omega)$ for every $\omega>0$ (see Theorem \ref{theorem_mu_bounds1a}).
\end{remark}
\begin{remark}
A result corresponding to Corollary \ref{corollary_omega_tends_0} would also hold for the principal periodic eigenvalue $\mu_V$ of an operator
$\frac{\partial}{\partial t}-D\nabla^2+V(x,t)$ on $\Omega(t)$, where $V$ was a continuous function on $\Omega(t)$ and periodic in $t$ with the same period as the domain. Writing $V(x,t)=\tilde{V}\left(x,\frac{\omega t}{2\pi}\right)$, the same proof would show then that
\begin{equation}\nonumber
\lim_{\omega\rightarrow 0}\mu_V(\omega) =\int_0^1 \lambda_{D,\tilde{V}}(\tilde{\Omega}(s))ds
\end{equation}
where for each $s$, $\lambda_{D,\tilde{V}}(\tilde{\Omega}(s))$ was the principal Dirichlet eigenvalue of the elliptic operator $-D\nabla^2+\tilde{V}(x,s)$ on $\tilde{\Omega}(s)$.
\end{remark}
\subsection{Asymptotic behaviour of $\mu(\omega)$ as $\omega\rightarrow \infty$}
For one-dimensional time-periodic domains $\Omega(t)=(A(t),A(t)+L(t))$, we shall give conditions under which $\mu(\omega)$ does and does not remain bounded as $\omega \rightarrow\infty$.
\begin{theorem} \label{theorem_om_tends_infty}
Let $l(\cdot)$ and $a(\cdot)$ be $1$-periodic functions, belonging to $C^{2+\alpha}([0,1])$ for some $\alpha>0$, and with $\min_{[0,1]} l=1$, $\max_{[0,1]} \vert a\vert =1$. For some $L_0>0$, $A_0\geq0$, $\omega>0$, let
\begin{equation}\label{eq_LA_la}
L(t)=L_0 l\left(\frac{\omega t}{2\pi}\right), \qquad
A(t)=A_0 a\left(\frac{\omega t}{2\pi}\right),
\end{equation}
and let $\mu(\omega)=\mu_u(\omega)$ be the principal periodic eigenvalue associated with $\Omega(t)=(A(t),A(t)+L(t))$. Then $\mu(\omega)=O(\omega^2)$ as $\omega \rightarrow\infty$, and if $a(\cdot)$ is constant, then $\mu(\omega)=O(1)$ as $\omega \rightarrow\infty$. Moreover, if $a(\cdot)$ is non-constant, there exist constants $C_1$, $C_2$ depending only on the functions $l$ and $a$ such that
\begin{enumerate}
\item If $\frac{A_0}{L_0}<C_1$ then $\mu(\omega)=O(1)$ as $\omega \rightarrow\infty$.
\item If $\frac{A_0}{L_0}>C_2$ then $\mu(\omega)=\underline{\overline{O}}(\omega^2)$ as $\omega \rightarrow\infty$.
\end{enumerate}
\end{theorem}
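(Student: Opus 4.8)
The plan is to derive each of the four claims by substituting the prescribed form \eqref{eq_LA_la} into the bounds of Theorems \ref{theorem_mu_bounds1a}, \ref{theorem_mu_bounds1b} and \ref{theorem_mu_bounds2} and keeping careful track of the powers of $\omega$. Throughout I would write $s=\frac{\omega t}{2\pi}$, $T=\frac{2\pi}{\omega}$, so that $\dot A(t)=\frac{\omega}{2\pi}A_0a'(s)$, $\ddot A(t)=\frac{\omega^2}{4\pi^2}A_0a''(s)$, $\ddot L(t)=\frac{\omega^2}{4\pi^2}L_0l''(s)$, $L(t)=L_0l(s)$, and $\frac1T\int_0^T g\big(\frac{\omega t}{2\pi}\big)\,dt=\int_0^1 g(s)\,ds$ for any $1$-periodic $g$. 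For the universal estimate $\mu(\omega)=O(\omega^2)$ I would use the right-hand inequality of \eqref{eq_mu_bounds_Q_u}: there $\frac1T\int_0^T\frac{D\pi^2}{L^2}\,dt=\frac{D\pi^2}{L_0^2}\int_0^1 l(s)^{-2}\,ds$ is independent of $\omega$, while $\frac1T\int_0^T\frac{\dot A^2}{4D}\,dt=\frac{\omega^2}{4\pi^2}\frac{A_0^2}{4D}\int_0^1 a'(s)^2\,ds$, and, using \eqref{eq_Q_def} to see that $\underline Q(t)=\frac{\omega^2}{4\pi^2}L_0l(s)\max_{0\le\eta\le1}\big(-\tfrac{\eta^2L_0l''(s)}{2}-\eta A_0a''(s)\big)$, the term $\frac1T\int_0^T\frac{\underline Q}{2D}\,dt$ is also $\frac{\omega^2}{4\pi^2}$ times a constant. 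Hence $\mu(\omega)\le c_+\omega^2+O(1)$ for some $c_+\ge0$, and since $\mu(\omega)\ge\frac{D\pi^2}{L_0^2}\int_0^1 l(s)^{-2}\,ds>0$ by Theorem \ref{theorem_mu_bounds1a}, this gives $\mu(\omega)=O(\omega^2)$, which also settles the upper half of the estimate $\mu(\omega)=\underline{\overline{O}}(\omega^2)$ claimed in part 2.

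For the two $O(1)$ statements I would instead apply Theorem \ref{theorem_mu_bounds1b} by exhibiting a fixed interval contained in $\Omega(t)$ for all $t$. If $a(\cdot)$ is constant then $\Omega(t)=(A(0),A(0)+L_0l(s))$ and, since $l\ge1$, the fixed interval $(A(0),A(0)+L_0)$ lies inside $\Omega(t)$ for every $t$, so $\mu(\omega)\le\frac{D\pi^2}{L_0^2}=O(1)$. If $a(\cdot)$ is non-constant, I would set $a^+=\max_{[0,1]}a$ and
\[
C_1:=\min_{s\in[0,1]}\frac{l(s)}{a^+-a(s)},
\]
where the ratio is read as $+\infty$ at any $s$ with $a(s)=a^+$; since $l\ge1$ and $a^+>\min_{[0,1]}a$, this minimum is attained and lies in $(0,\infty)$, so the hypothesis $\frac{A_0}{L_0}<C_1$ is not vacuous. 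That hypothesis is precisely the condition under which $A_0a^+<A_0a(s)+L_0l(s)$ for every $s$, i.e. under which $\Omega_1:=\big(A_0a^+,\ \min_s(A_0a(s)+L_0l(s))\big)$ is a non-degenerate fixed interval contained in $\Omega(t)$ for all $t$; Theorem \ref{theorem_mu_bounds1b} then bounds $\mu(\omega)$ above by a constant independent of $\omega$, and together with the lower bound $\mu(\omega)\ge\frac{D\pi^2}{L_0^2}\int_0^1 l(s)^{-2}\,ds>0$ this proves part 1.

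The $\omega^2$ lower bound in part 2 is the substantive step, and the term that drives it is the one that Lemma \ref{lemma_mu_u_w} extracts from the change of variables $u\mapsto w$, namely $\frac1T\int_0^T\frac{\dot A^2}{4D}\,dt=\frac{\omega^2}{4\pi^2}\frac{A_0^2}{4D}\int_0^1 a'(s)^2\,ds$, which is of order $\omega^2$ with a strictly positive coefficient because $a$ is non-constant. Feeding \eqref{eq_LA_la} into the left-hand inequality of \eqref{eq_mu_bounds_Q_u} and bounding $\max_{0\le\eta\le1}\big(\tfrac{\eta^2L_0l''(s)}{2}+\eta A_0a''(s)\big)\le\tfrac{L_0|l''(s)|}{2}+A_0|a''(s)|$, I would obtain
\[
\mu(\omega)\ \ge\ \frac{\omega^2}{4\pi^2}\cdot\frac{1}{4D}\Big(A_0^2\!\int_0^1\! a'(s)^2\,ds\ -\ L_0^2\!\int_0^1\! l(s)|l''(s)|\,ds\ -\ 2L_0A_0\!\int_0^1\! l(s)|a''(s)|\,ds\Big)\ +\ O(1).
\]
Writing $p=\int_0^1 a'(s)^2\,ds>0$, $q=\int_0^1 l(s)|l''(s)|\,ds\ge0$, $r=\int_0^1 l(s)|a''(s)|\,ds\ge0$ and $x=\frac{A_0}{L_0}$, the bracket equals $\frac{L_0^2}{4D}(px^2-2rx-q)$, which is strictly positive exactly when $x>C_2:=\frac{r+\sqrt{r^2+pq}}{p}$ — a positive, finite constant depending only on $l$ and $a$. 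Thus $\frac{A_0}{L_0}>C_2$ forces $\mu(\omega)\ge c_-\omega^2$ for some $c_->0$ and all sufficiently large $\omega$, and combined with $\mu(\omega)=O(\omega^2)$ this gives $\mu(\omega)=\underline{\overline{O}}(\omega^2)$.

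I expect the main obstacle to be exactly this last step — correctly isolating the $\omega^2$-coefficient in Theorem \ref{theorem_mu_bounds2} (equivalently, combining Lemma \ref{lemma_mu_u_w} with the lower bound on $\mu_w$ obtained in its proof) and estimating the $\max_{\eta}$ term coarsely enough to produce a clean threshold while still keeping the positive contribution $A_0^2\int_0^1 a'(s)^2\,ds$ dominant, which is where both hypotheses (that $a$ is non-constant, and that $\frac{A_0}{L_0}>C_2$) are genuinely needed. The remaining parts are more routine substitutions into bounds already established, the only care being to check that $C_1\in(0,\infty)$ so that the two dichotomy conditions are non-trivial.
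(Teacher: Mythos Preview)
Your proposal is correct and follows essentially the same route as the paper: Theorems \ref{theorem_mu_bounds1a} and \ref{theorem_mu_bounds1b} supply the $\omega$-independent lower and upper bounds for the $O(1)$ claims, and the two sides of \eqref{eq_mu_bounds_Q_u} from Theorem \ref{theorem_mu_bounds2} give the $O(\omega^2)$ upper bound and the $\omega^2$ lower bound driven by the $\frac{\dot A^2}{4D}$ term. The only differences are cosmetic --- the paper takes the simpler $C_1=\frac{\min l}{\max a-\min a}$ (which is no larger than your $C_1$) and bounds $\overline Q$ using the positive parts $[l'']^{+}$, $[a'']^{+}$ rather than absolute values (yielding a slightly smaller $C_2$) --- but the structure of the argument is identical.
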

As before, the notation $\mu(\omega)=\underline{\overline{O}}(\omega^2)$ is used to mean that $\mu(\omega)$ is `exactly of order' $\omega^2$ in the sense that $\mu(\omega)=O(\omega^2)$ and $\omega^2=O(\mu(\omega))$ as $\omega\rightarrow\infty$.
\begin{proof}
If $\max_{s\in[0,1]} (A_0 a(s))< \min_{s\in[0,1]}(A_0 a(s)+L_0 l(s))$ then by Theorems \ref{theorem_mu_bounds1a} and \ref{theorem_mu_bounds1b}, we have lower and upper bounds on $\mu(\omega)$ which are independent of $\omega>0$:
\begin{equation}
\frac{D\pi^2}{L_0^2}\int_0^1 \frac{1}{l(s)^2}ds \leq \mu(\omega) \leq \frac{D\pi^2}{(\min_{[0,1]}(A_0 a+L_0 l) - \max_{[0,1]} (A_0 a))^2}.
\end{equation}
So, $\mu(\omega)=O(1)$ as $\omega \rightarrow\infty$ as long as $\max_{[0,1]} (A_0 a)< \min_{[0,1]}(A_0 a+L_0 l)$. If $a(\cdot)$ is constant then this will be satisfied because, by assumption, $\min_{[0,1]} (L_0 l)>0$. If $a(\cdot)$ is non-constant, then a sufficient condition is that
\begin{equation}
\frac{A_0}{L_0}<\frac {\min l}{\max a -\min a}.
\end{equation}
Next, in order to prove the other claimed properties, we shall consider the bounds \eqref{eq_mu_bounds_Q_u} that were proved in Theorem \ref{theorem_mu_bounds2}. Define non-negative constants $c_1$, $c_2$, $c_3$, $c_4$, $c_5$, $c_6$ in terms of the functions $l$ and $a$ as follows:
\begin{align}
c_1=\int_0^1 \frac{1}{l(s)^2}ds, \qquad c_2=\int_0^1 a'(s)^2 ds, \qquad c_3=\int_0^1 l(s)[a''(s)]^{+} ds, \qquad \nonumber\\ c_4=\int_0^1 l(s)[l''(s)]^{+} ds, \qquad
c_5=\int_0^1 l(s)[a''(s)]^{-} ds, \qquad c_6=\int_0^1 l(s)[l''(s)]^{-} ds.
\end{align}
Then note that
\begin{equation}
\frac{1}{T}\int_0^T \frac{D\pi^2}{L(t)^2}dt=\frac{D\pi^2}{L_0^2} c_1,
\end{equation}
\begin{equation}
\frac{1}{T}\int_0^T \frac{\dot{A}(t)^2}{4D}dt=\left(\frac{\omega}{2\pi}\right)^2 \frac{A_0^2}{4D} c_2,
\end{equation}
\begin{equation}
0\leq \frac{1}{T}\int_0^T\frac{\overline{Q}(t)}{2D} dt \leq \left(\frac{\omega}{2\pi}\right)^2 \left(\frac{A_0L_0}{2D}c_3 +\frac{L_0^2}{4D}c_4 \right) ,
\end{equation}
\begin{equation}
0\leq \frac{1}{T}\int_0^T\frac{\underline{Q}(t)}{2D}dt \leq
\left(\frac{\omega}{2\pi}\right)^2 \left( \frac{A_0L_0}{2D}c_5 + \frac{L_0^2}{4D}c_6\right).
\end{equation}
Therefore Theorem \ref{theorem_mu_bounds2} implies that
\begin{equation}\label{eq_bounds_c1to6}
\frac{D\pi^2}{L_0^2}c_1 +\left(\frac{\omega}{2\pi}\right)^2\left(\frac{A_0^2}{4D}c_2 -\frac{A_0L_0}{2D}c_3 -\frac{L_0^2}{4D}c_4\right) \leq \mu(\omega) \leq
\frac{D\pi^2}{L_0^2}c_1 +\left(\frac{\omega}{2\pi}\right)^2\left(\frac{A_0^2}{4D}c_2 +\frac{A_0L_0}{2D}c_5 +\frac{L_0^2}{4D}c_6\right),
\end{equation}
which proves that $\mu(\omega)=O(\omega^2)$ as $\omega \rightarrow\infty$, for any $A_0$, $L_0$. Moreover, if $\frac{A_0^2}{4}c_2 -\frac{A_0L_0}{2}c_3 -\frac{L_0^2}{4}c_4 >0$ then $\mu(\omega)=\underline{\overline{O}}(\omega^2)$ as $\omega \rightarrow\infty$. If $a(\cdot)$ is non-constant then $c_2\neq 0$ and so this inequality will hold for $\frac{A_0}{L_0}$ large enough (depending on $c_2$, $c_3$, $c_4$).
\end{proof}
In the following example we give these estimates explicitly.
\begin{example} \label{example_periodic_A_om}
Let $L_0>0$ be constant and $A(t)=A_0\sin(\omega t)$ for some $\omega>0$, $A_0>0$. Consider the $\frac{2\pi}{\omega}$-periodic domain $\Omega(t)=(A(t),A(t)+L_0)$ and let $\mu(\omega)=\mu_u(\omega)$ be the principal periodic eigenvalue on $\Omega(t)$. By Theorems \ref{theorem_mu_bounds1a} and \ref{theorem_mu_bounds1b} we conclude that
\begin{equation}\label{eq_Aom_1}
\frac{D\pi^2}{L_0^2} \leq \mu(\omega) \qquad\textrm{for every } \omega>0,
\end{equation}
\begin{equation}\label{eq_Aom_2}
\textrm{and if }2A_0<L_0, \qquad \mu(\omega)\leq \frac{D\pi^2}{(L_0-2A_0)^2}  \qquad\textrm{for every } \omega>0.
\end{equation}
To apply the bounds from Theorem \ref{theorem_mu_bounds2}, calculate
\begin{equation}\label{eq_Adot^2_periodic}
\int\limits_0^t \frac{\dot{A}(\zeta)^2}{4D} d\zeta=\frac{A_0^2\omega^2}{4D} \left(\frac{t}{2}+\frac{\sin(2\omega t)}{4\omega}\right).
\end{equation}
Also calculate the $\frac{2\pi}{\omega}$-periodic functions $\overline{Q}(t)$ and $\underline{Q}(t)$ as defined in \eqref{eq_Q_def}:
\begin{equation}
\overline{Q}(t)=\begin{cases} 0 &\textrm{for }0\leq t\leq \frac{\pi}{\omega}\\
\ds{-A_0L_0\omega^2\sin(\omega t)}&\textrm{for }\frac{\pi}{\omega}\leq t\leq \frac{2\pi}{\omega},
\end{cases}
\end{equation}
\begin{equation}
\underline{Q}(t)=\begin{cases}\ds{A_0L_0\omega^2\sin(\omega t)}&\textrm{for } 0\leq t\leq \frac{\pi}{\omega}\\
0&\textrm{for }\frac{\pi}{\omega}\leq t\leq \frac{2\pi}{\omega} ,\\
\end{cases}
\end{equation}
and so
\begin{equation}
\int_0^{\frac{2\pi}{\omega}} \frac{\overline{Q}(\zeta)}{2D}d\zeta =
\int_0^{\frac{2\pi}{\omega}} \frac{\underline{Q}(\zeta)}{2D}d\zeta = \frac{A_0 L_0\omega}{D}.
\end{equation}
By Theorem \ref{theorem_mu_bounds2} we deduce that
\begin{equation}\label{eq_ex2_Q}
\frac{D\pi^2}{L_0^2}+\frac{A_0^2\omega^2}{8D}-\frac{A_0 L_0 \omega^2}{2\pi D} \leq\mu(\omega) \leq \frac{D\pi^2}{L_0^2} +\frac{A_0^2\omega^2}{8D}+\frac{A_0 L_0 \omega^2}{2\pi D}.
\end{equation}
In agreement with Corollary \ref{corollary_omega_tends_0} and Theorem \ref{theorem_om_tends_infty}, the bounds \eqref{eq_Aom_1}, \eqref{eq_Aom_2} and \eqref{eq_ex2_Q} show that:
\begin{equation}
\mu=\frac{D\pi^2}{L_0^2}+O(\omega^2) \textrm{ as }\omega \rightarrow 0.
\end{equation}
\begin{equation}
\mu(\omega)=O(\omega^2)\textrm{ as }\omega\rightarrow \infty.
\end{equation}
\begin{equation}
\textrm{If } \quad \frac{A_0}{L_0} < \frac{1}{2} \quad\textrm{ then}
\quad \mu(\omega)=O(1)\textrm{ as }\omega\rightarrow \infty.
\end{equation}
\begin{equation}
\textrm{If } \quad \frac{A_0}{L_0}> \frac{4}{\pi}\quad\textrm{ then}
\quad \mu(\omega)=\underline{\overline{O}}(\omega^2)\textrm{ as }\omega\rightarrow \infty.
\end{equation}
It would be interesting to investigate the $\omega\rightarrow\infty$ limit in the intermediate parameter range $\frac{L_0}{2}\leq A_0 \leq \frac{4L_0}{\pi}$.
\end{example}

\subsection{Monotonicity of $\mu(\omega)$ with respect to $\omega>0$}
In this section, we prove first that the principal periodic eigenvalue associated with a $T$-periodic domain $\Omega(t)\subset \mathbb{R}^N$ is the same as the eigenvalue associated with the domain $\Omega(-t)$. We use this together with Lemma 2.1 and Theorem 1.1 from \cite{LiuLouPengZho} to show that, for $\omega>0$, the principal periodic eigenvalue $\mu(\omega)$ associated with the domain $\tilde{\Omega}\left(\frac{\omega t}{2\pi}\right)$ is monotonic non-decreasing with respect to $\omega$.

\begin{lemma} \label{lemma_omega_even}
Let $\Omega(t)$ be a $T$-periodic domain. Let $\mu_+$ be the principal periodic eigenvalue associated with $\Omega(t)$, and let $\mu_-$ be the principal periodic eigenvalue associated with $\Omega_-(t):=\Omega(-t)$. Then $\mu_+= \mu_-$.
\end{lemma}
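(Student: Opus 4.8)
The plan is to characterise $\mu_+$ and $\mu_-$ intrinsically, as exponents attached to the heat equation on the \emph{moving} domains $\Omega(t)$ and $\Omega(-t)$, and then to pair the two corresponding principal eigenfunctions against each other using a transport-theorem computation together with $T$-periodicity.

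First I would observe that, at each $t$, the map $\xi = h(x,t)$ is a bijection $\overline{\Omega(t)}\to\overline{\Omega_0}$, is $T$-periodic in $t$, and by construction transforms the operator $\frac{\partial}{\partial t}-D\nabla^2$ (acting on functions of $(x,t)$ with $x\in\Omega(t)$) into the operator $\frac{\partial}{\partial t}-\mathcal{L}$ of \eqref{eq_L_aij_bj_cj}, \eqref{eq_aij_bj_cj}; it also preserves positivity, the homogeneous Dirichlet condition, and $T$-periodicity in $t$. Hence the relations \eqref{eq_princ_eig1}--\eqref{eq_princ_eig4} defining $\mu_+$ are equivalent to the existence of $\Phi_+(x,t)>0$ for $x\in\Omega(t)$, with $\Phi_+=0$ on $\partial\Omega(t)$, $\Phi_+(\cdot,t)\equiv\Phi_+(\cdot,t+T)$, and
\[
\frac{\partial \Phi_+}{\partial t}-D\nabla^2\Phi_+=\mu_+\Phi_+ \qquad\textrm{for } x\in\Omega(t).
\]
Since $\Omega(-t)$ is again a $T$-periodic domain satisfying the standing hypotheses (the chart $h(\cdot,-t)$ is admissible), \cite[Theorem 1]{CasLaz} applies to it, so there is likewise $\Phi_-(x,t)>0$ for $x\in\Omega(-t)$, $T$-periodic, vanishing on $\partial\Omega(-t)$, with $\frac{\partial\Phi_-}{\partial t}-D\nabla^2\Phi_-=\mu_-\Phi_-$ for $x\in\Omega(-t)$.

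Next I would time-reverse the second eigenfunction: put $\tilde\Phi_-(x,t):=\Phi_-(x,-t)$. Then $\tilde\Phi_-$ is positive and $T$-periodic, vanishes on $\partial\Omega(t)$ (because $\Omega(-(-t))=\Omega(t)$), and satisfies the backward equation
\[
-\frac{\partial \tilde\Phi_-}{\partial t}-D\nabla^2\tilde\Phi_-=\mu_-\tilde\Phi_- \qquad\textrm{for } x\in\Omega(t).
\]
Now pair $\Phi_+$ with $\tilde\Phi_-$: set $g(t):=\int_{\Omega(t)}\Phi_+(x,t)\,\tilde\Phi_-(x,t)\,dx$. Differentiating by the Reynolds transport theorem, the boundary contribution vanishes by the Dirichlet conditions, so $g'(t)=\int_{\Omega(t)}\partial_t(\Phi_+\tilde\Phi_-)\,dx$; substituting the two equations and applying Green's second identity (whose boundary term again vanishes) yields $g'(t)=(\mu_+-\mu_-)\,g(t)$, hence $g(t)=g(0)\,e^{(\mu_+-\mu_-)t}$. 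But $g$ is $T$-periodic, since $\Omega$, $\Phi_+$ and $\tilde\Phi_-$ all are, and $g(t)>0$ for every $t$ because the integrand is positive on the nonempty set $\Omega(t)$. A strictly positive $T$-periodic function of the form $g(0)e^{(\mu_+-\mu_-)t}$ forces $\mu_+-\mu_-=0$, which is the assertion.

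The step I expect to need most care is the first one: making precise that the reference-domain eigenvalue defined by \eqref{eq_princ_eig1}--\eqref{eq_princ_eig4} is genuinely an invariant of the moving domain $\Omega(t)$ (independent of the chart $h$) and that $\Omega(-t)$ falls under the same hypotheses. The pairing identity itself is a routine parabolic computation, the only technical input being enough smoothness of $\Phi_+$ and $\Phi_-$ up to $\partial\Omega_0$ to justify the integrations by parts, which follows from parabolic regularity given the assumed H\"older coefficients. One could alternatively argue inside the reference domain via the formal adjoint $\mathcal{L}^*$, using that it shares the principal periodic eigenvalue of $\mathcal{L}$; but relating $\mathcal{L}^*$ to the operator arising from $\Omega(-t)$ is awkward, since under $t\mapsto -t$ the advection coefficient $b_j=-\partial h_j/\partial t$ changes sign while the geometric coefficient $c_j=D\nabla^2 h_j$ does not. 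Working directly on the moving domain avoids this bookkeeping.
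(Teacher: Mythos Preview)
Your proposal is correct and follows essentially the same approach as the paper: pull the principal eigenfunctions back to the moving domains, time-reverse the one for $\Omega(-t)$ to obtain a backward eigenfunction on $\Omega(t)$, pair the two over $\Omega(t)$, and use the Dirichlet conditions together with Green's identity to show that the pairing evolves as $e^{(\mu_+-\mu_-)t}$, which by periodicity and positivity forces $\mu_+=\mu_-$. Your version is slightly more explicit than the paper's about the positivity of $g(t)$ and about the chart-independence issue, but the argument is the same.
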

\begin{proof}
The eigenvalues $\mu_+$, $\mu_-$ are principal periodic eigenvalues of problems of the form \eqref{eq_princ_eig1}, \eqref{eq_princ_eig2}, \eqref{eq_princ_eig3}, \eqref{eq_princ_eig4} on $\Omega_0$, with operators $\mathcal{L}_+$ and $\mathcal{L}_-$ coming from the changes of variables. In terms of the original co-ordinates, this means that there exist positive functions $\psi_+(x,t)$ on $\Omega(t)$ and $\psi_-(x,t)$ on $\Omega_-(t)$, which are $T$-periodic in $t$, and which satisfy
\begin{equation}\label{eq_psi+}
\frac{\partial \psi_+}{\partial t} = D \nabla^2 \psi_+ + \mu_+\psi_+  \qquad \textrm{for }x\in \Omega(t)
\end{equation}
\begin{equation}\label{eq_psi+_BC}
\psi_+(x,t)=0 \qquad\textrm{for }x\in \partial\Omega(t) 
\end{equation}
\begin{equation}\label{eq_psi-}
\frac{\partial \psi_-}{\partial t} = D \nabla^2 \psi_- + \mu_-\psi_-  \qquad \textrm{for }x\in \Omega_-(t)
\end{equation}
\begin{equation}\label{eq_psi-_BC}
\psi_-(x,t)=0 \qquad\textrm{for }x\in \partial\Omega_-(t) .
\end{equation}
Let $\overline{\psi}(x,t)=\psi_-(x,-t)$ for $x\in\Omega_-(-t)=\Omega(t)$. Then $\overline{\psi}(x,t)$ is positive on $\Omega(t)$ and $T$-periodic in $t$, and satisfies
\begin{equation}\label{eq_psi*}
-\frac{\partial \overline{\psi}}{\partial t} = D \nabla^2 \overline{\psi} + \mu_-\overline{\psi}  \qquad \textrm{for }x\in \Omega(t)
\end{equation}
\begin{equation}\label{eq_psi*_BC}
\overline{\psi}(x,t)=0 \qquad\textrm{for }x\in \partial\Omega(t) .
\end{equation}
If $I(t)= \int_{\Omega(t)} \psi_+(x,t)\overline{\psi}(x,t) dx$ then, using the zero Dirichlet boundary conditions on $\partial\Omega(t)$, we have
\begin{equation}
\frac{dI}{dt} =\int_{\Omega(t)} \left( \frac{\partial\psi_+}{\partial t}\overline{\psi} +   \frac{\partial\overline{\psi}}{\partial t}\psi_+ \right) dx.
\end{equation}
Using equations \eqref{eq_psi+} and \eqref{eq_psi*}, and integrating by parts, this becomes
\begin{equation}
\frac{dI}{dt} = ( \mu_+ - \mu_-)I(t).
\end{equation}
Therefore $I(t)= I(0)e^{( \mu_+ - \mu_-)t}$, but since $I(t)$ is periodic it must be that $ \mu_+ - \mu_- =0$.
\end{proof}
\begin{remark}
The proof of Lemma \ref{lemma_omega_even} not only shows that $\mu_+ = \mu_-$ but also that $I(t) \equiv I(0)$. That is, the integral $\int_{\Omega(t)} \psi_+(x,t)\psi_-(x,-t) dx$ is independent of $t$.
\end{remark}
By rescaling time to $s = \frac{\omega t}{2\pi}$, we can now extend some ideas of Liu, Lou, Peng and Zhou in \cite{LiuLouPengZho} to prove a monotonicity result with respect to the frequency $\omega$.
\begin{theorem} \label{theorem_monotonicity_omega_new}
Let $\tilde{\Omega}(s)$ be a $1$-periodic domain, and $\omega>0$. Let $\mu(\omega)$ be the principal periodic eigenvalue associated with $\Omega(t)=\tilde{\Omega}\left(\frac{\omega t}{2\pi}\right)$. Then $\mu(\omega)$ is monotonic non-decreasing: $\frac{d \mu(\omega)}{d\omega} \geq 0$. Moreover $\frac{d \mu(\omega)}{d\omega}=0$ if and only if the domain is independent of time.
\end{theorem}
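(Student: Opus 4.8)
The plan is to rescale time to $s=\frac{\omega t}{2\pi}$ as in Section \ref{section_mu_omega}, differentiate $\mu(\omega)$, and then pin down the sign of the derivative by extending the argument of \cite[Theorem 1.1]{LiuLouPengZho}, with Lemma \ref{lemma_omega_even} supplying the backward eigenfunction. \emph{Step 1 (set-up).} After rescaling, $\mu=\mu(\omega)$ is the principal periodic eigenvalue of $\frac{\omega}{2\pi}\frac{\partial}{\partial s}-D\nabla^2$ on the $1$-periodic moving domain $\tilde\Omega(s)$; writing its positive, $1$-periodic eigenfunction in the original spatial variables as $\psi(x,s)$, we have $\frac{\omega}{2\pi}\frac{\partial\psi}{\partial s}=D\nabla^2\psi+\mu\psi$ in $\tilde\Omega(s)$ and $\psi=0$ on $\partial\tilde\Omega(s)$. (This is the problem \eqref{eq_omega_s1}, \eqref{eq_omega_s2}, \eqref{eq_omega_s3} written before the change of variables; the $\frac{\omega}{2\pi}\tilde b_j$ advection term in $\mathcal L_\omega$ is precisely what converts $\partial_s$ into the derivative along the moving frame.) Applying \cite[Theorem 1]{CasLaz} to the backward equation gives a positive, $1$-periodic $\overline\psi(x,s)$ with $-\frac{\omega}{2\pi}\frac{\partial\overline\psi}{\partial s}=D\nabla^2\overline\psi+\mu\overline\psi$ in $\tilde\Omega(s)$ and $\overline\psi=0$ on $\partial\tilde\Omega(s)$; by Lemma \ref{lemma_omega_even} (with $T=\frac{2\pi}{\omega}$) it has the same eigenvalue $\mu$, and in fact $\overline\psi(x,s)=\psi^{-}(x,-s)$, where $\psi^{-}$ is the forward principal eigenfunction of the time-reversed domain $\tilde\Omega(-s)$. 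By the Remark following Lemma \ref{lemma_omega_even}, $\int_{\tilde\Omega(s)}\psi(x,s)\overline\psi(x,s)\,dx$ is independent of $s$; call this positive constant $I_0$.

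\emph{Step 2 (the derivative formula).} In the rescaled picture the operator depends affinely on $\omega$, so $\mu(\omega)$ is a simple eigenvalue depending analytically on $\omega$ on $(0,\infty)$, as do the normalised eigenfunctions. Differentiating the forward equation in $\omega$, multiplying by $\overline\psi$, and integrating over $\tilde\Omega(s)\times[0,1]$ — using Green's identity in $x$ and integration by parts in $s$, with every boundary contribution (including those produced by the motion of $\partial\tilde\Omega(s)$) vanishing because $\psi$ and $\overline\psi$ vanish on $\partial\tilde\Omega(s)$ and are $1$-periodic — one obtains, as in \cite[Lemma 2.1]{LiuLouPengZho} but carrying along the extra advection term,
\[
\frac{d\mu}{d\omega}=\frac{1}{2\pi I_0}\int_0^1\int_{\tilde\Omega(s)}\frac{\partial\psi}{\partial s}\,\overline\psi\,dx\,ds .
\]
Thus the theorem reduces to showing that $\mathcal N:=\int_0^1\int_{\tilde\Omega(s)}\frac{\partial\psi}{\partial s}\,\overline\psi\,dx\,ds\ge 0$, with $\mathcal N=0$ exactly when $\tilde\Omega(s)$ is independent of $s$.

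\emph{Step 3 (signing $\mathcal N$ — the main obstacle).} Using the two equations one can rewrite $\mathcal N$ in several equivalent ways, e.g. $2\mathcal N=\int_0^1\int_{\tilde\Omega(s)}\psi\overline\psi\,\partial_s\log(\psi/\overline\psi)\,dx\,ds$ and $\frac{\omega}{2\pi}\mathcal N=\mu I_0-D\int_0^1\int_{\tilde\Omega(s)}\nabla\psi\cdot\nabla\overline\psi\,dx\,ds$, but these identities are mutually equivalent and do not by themselves fix the sign, and a pointwise periodic supersolution of the form $g(s)\psi^{\alpha}\overline\psi^{1-\alpha}$ can be checked to fail. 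The key point is that, in the moving frame, $\frac{\omega}{2\pi}\partial_s-D\nabla^2$ carries \emph{no} advection term; so the argument by which \cite[Theorem 1.1]{LiuLouPengZho} establishes non-negativity of the corresponding quantity for advection-free periodic-parabolic operators on a fixed domain should be run on the moving domain $\tilde\Omega(s)$ — the zero Dirichlet condition makes the moving-boundary terms in that computation disappear, and the time-reversal identity $\overline\psi(x,s)=\psi^{-}(x,-s)$ of Step 1 (our Lemma \ref{lemma_omega_even}) plays the role of the symmetry used there. This yields $\mathcal N\ge 0$ and hence $\frac{d\mu}{d\omega}\ge 0$. I expect this adaptation — verifying that the proof of \cite[Theorem 1.1]{LiuLouPengZho} survives the passage to a time-dependent domain — to be the only genuine difficulty; everything else is bookkeeping.

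\emph{Step 4 (the equality case).} If $\frac{d\mu}{d\omega}=0$ at some $\omega_0>0$, then $\mathcal N=0$ there, which forces equality in the inequality of Step 3; tracing through that equality case shows the eigenfunctions must have separated space and time dependence (so $\frac{\partial\psi}{\partial s}\equiv 0$), and since $\psi(\cdot,s)$ vanishes on $\partial\tilde\Omega(s)$ and is positive inside $\tilde\Omega(s)$ for every $s$, this is only possible if $\tilde\Omega(s)$ is the same for all $s$. Conversely, if $\tilde\Omega(s)\equiv\tilde\Omega$ is time-independent, then $\psi$ is the principal Dirichlet eigenfunction of $-D\nabla^2$ on $\tilde\Omega$ and $\mu(\omega)\equiv D\lambda(\tilde\Omega)$, so $\frac{d\mu}{d\omega}\equiv 0$. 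This gives the ``if and only if'' statement.
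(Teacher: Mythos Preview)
Your outline is the paper's own strategy: rescale to $s$, use Lemma \ref{lemma_omega_even} to produce the backward eigenfunction $\overline\psi$ with the same eigenvalue, differentiate in $\omega$ to obtain
\[
\mu'(\omega)=\frac{1}{2\pi I_0}\int_0^1\!\!\int_{\tilde\Omega(s)}\frac{\partial\psi}{\partial s}\,\overline\psi\,dx\,ds,
\]
and then adapt \cite[Theorem 1.1]{LiuLouPengZho} to sign this. Steps 1, 2 and 4 match the paper almost verbatim.

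The gap is exactly where you locate it, in Step 3. You correctly identify that the moving-frame operator $\frac{\omega}{2\pi}\partial_s-D\nabla^2$ has no advection, and that the Dirichlet condition kills the moving-boundary terms; but you stop at ``should be run on the moving domain'' without saying \emph{which} computation from \cite{LiuLouPengZho} to run. The paper makes this explicit by introducing the functional
\[
J_\omega(\zeta)=\int_0^1\!\!\int_{\tilde\Omega(s)}\psi\,\overline\psi\,\frac{\frac{\omega}{2\pi}\partial_s\zeta - D\nabla^2\zeta}{\zeta}\,dx\,ds
\]
(for positive $1$-periodic $\zeta$ vanishing on $\partial\tilde\Omega(s)$ with nonzero normal derivative), verifying directly from the two eigen-equations that $\mu'(\omega)=\frac{1}{2\omega}\bigl(J_\omega(\psi)-J_\omega(\overline\psi)\bigr)$, and then checking that the proof of \cite[Lemma 2.1, Dirichlet case]{LiuLouPengZho} goes through on the time-dependent domain to give the log-gradient identity
\[
J_\omega(\psi)-J_\omega(\zeta)=D\int_0^1\!\!\int_{\tilde\Omega(s)}\psi\,\overline\psi\,\Bigl|\nabla\log\tfrac{\zeta}{\psi}\Bigr|^2 dx\,ds \ \ge 0.
\]
This is the missing idea; once you have it, $\mu'(\omega)\ge0$ is immediate. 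For Step 4, equality in this identity gives $\overline\psi=\beta(s)\psi$ (not directly $\partial_s\psi\equiv0$); substituting into the backward equation and using the forward one yields $\beta'\psi+2\beta\,\partial_s\psi=0$, which on $\partial\tilde\Omega(s)$ forces $\partial_s\psi=0$ there, and combined with $\psi>0$ inside and $\partial_\nu\psi\neq0$ on the boundary this pins $\partial\tilde\Omega(s)$ in place.
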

\begin{proof}
Changing variables to $s = \frac{\omega t}{2\pi}$, the eigenfunctions $\psi_+(x,t)$ and $\overline{\psi}(x,t)$ from Lemma \ref{lemma_omega_even} now become functions $\phi_{\omega}(x,s)$ and $\overline{\phi}_{\omega}(x,s)$ which are positive on $x\in \tilde{\Omega}(s)$ and are $1$-periodic in $s$. By Lemma \ref{lemma_omega_even} they satisfy
\begin{equation}\label{eq_phi+}
\frac{\omega}{2\pi} \frac{\partial \phi_{\omega}}{\partial s} = D \nabla^2 \phi_{\omega} + \mu(\omega)\phi_{\omega} \qquad \textrm{for }x\in \tilde{\Omega}(s)
\end{equation}
\begin{equation}\label{eq_phi*}
-\frac{\omega}{2\pi} \frac{\partial \overline{\phi}_{\omega}}{\partial s} = D \nabla^2 \overline{\phi}_{\omega} + \mu(\omega)\overline{\phi}_{\omega} \qquad \textrm{for }x\in \tilde{\Omega}(s)
\end{equation}
\begin{equation}\label{eq_phi+*_BC}
\phi_{\omega}(x,s)=\overline{\phi}_{\omega}(x,s)=0 \qquad\textrm{for }x\in \partial \tilde{\Omega}(s) .
\end{equation}
Without loss of generality we may normalise them so that
\begin{equation}
\int_0^1\int_{\tilde{\Omega}(s)} \phi_{\omega}(x,s)^2 dxds = \int_0^1\int_{\tilde{\Omega}(s)} \phi_{\omega}(x,s)\overline{\phi}_{\omega}(x,s)dxds =1.
\end{equation}
Now, we follow the same steps as in \cite[Theorem 1.1]{LiuLouPengZho}. Namely we take equation \eqref{eq_phi+} and differentiate it with respect to $\omega$. Writing $\phi_{\omega}'$ for $\frac{ \partial \phi_{\omega}}{\partial \omega}$ and $\mu'(\omega)$ for $\frac{d\mu(\omega)}{d \omega}$, this becomes
\begin{equation}\label{eq_phi'}
\frac{1}{2\pi} \frac{\partial \phi_{\omega}}{\partial s} +
\frac{\omega}{2\pi} \frac{\partial \phi_{\omega}'}{\partial s} = D \nabla^2 \phi_{\omega}' + \mu(\omega)\phi_{\omega}' + \mu(\omega)\phi_{\omega}\qquad \textrm{for }x\in \tilde{\Omega}(s).
\end{equation}
Multiply this by $\overline{\phi}_{\omega}$ and integrate over $x \in \tilde{\Omega}(s)$ and $s \in [0,1]$. Using the boundary conditions \eqref{eq_phi+*_BC}, the equation \eqref{eq_phi*}, and the normalisation, this leads to
\begin{equation}\label{eq_omega'}
\frac{1}{2\pi} \int_0^1 \int_{\tilde{\Omega}(s)} \frac{\partial \phi_{\omega}}{\partial s} \overline{\phi}_{\omega} \ dx ds =  \mu'(\omega).
\end{equation}
Next, we define a functional
\begin{equation}
J_{\omega}(\zeta) = \int_0^1 \int_{\tilde{\Omega}(s)} \phi_{\omega}(x,s)\overline{\phi}_{\omega}(x,s) \left( \frac{ \frac{\omega}{2\pi} \frac{\partial \zeta}{\partial s}- D \nabla^2 \zeta}{\zeta(x,s)}\right) dxds
\end{equation}
for functions $\zeta(x,s)$ which are $C^2$ in $x\in\tilde{\Omega}(s)$ and $C^1$ on $\overline{\tilde{\Omega}(s)}$, and which are $C^1$ and $1$-periodic in $s\in[0,1]$, and which are positive for $x\in\tilde{\Omega}(s)$, with $\zeta(x,s) =0$ and $\nabla\zeta \cdot \nu \neq 0$ for $x\in\partial\tilde{\Omega}(s)$ (see \cite{LiuLouPengZho}).
Then it is straightforward to check using \eqref{eq_phi+} and \eqref{eq_phi*} that equation \eqref{eq_omega'} can be written as
\begin{equation}
\mu'(\omega) = \frac{1}{2\pi} \int_0^1 \int_{\tilde{\Omega}(s)} \frac{\partial \phi_{\omega}}{\partial s} \overline{\phi}_{\omega} dx ds = \frac{1}{2\omega}\left( J_{\omega}(\phi_{\omega}) -J_{\omega}(\overline{\phi}_{\omega}) \right).
\end{equation}
To prove the required result we must therefore show that $J_{\omega}(\phi_{\omega}) -J_{\omega}(\overline{\phi}_{\omega}) \geq 0$.
But this can be proved exactly as in the proof of Lemma 2.1 from  \cite{LiuLouPengZho}. Indeed, although our definition of $J_{\omega}$ now includes a time-dependent domain $\tilde{\Omega}(s)$, one can check that each step of the proof of \cite[Lemma 2.1, case 2 (b=1)]{LiuLouPengZho} (i.e. the case of Dirichlet boundary conditions) goes through exactly as in \cite{LiuLouPengZho}. This shows that, for all functions $\zeta$ of the class defined above,
\begin{equation}\label{eq_Jom_log}
J_{\omega}(\phi_{\omega}) -J_{\omega}(\zeta) = \int_0^1 \int_{\tilde{\Omega}(s)}
D \phi_{\omega}(x,s)\overline{\phi}_{\omega}(x,s) \left\vert \nabla\left(\log \frac{\zeta}{\phi_{\omega}}\right) \right\vert ^2 dxds,
\end{equation}
and the right hand side is clearly non-negative.
In particular, $J_{\omega}(\phi_{\omega}) -J_{\omega}(\overline{\phi}_{\omega}) \geq 0$.
This proves that $\mu'(\omega)\geq0$. Moreover, $\mu'(\omega)=0$ if and only if $J_{\omega}(\phi_{\omega}) -J_{\omega}(\overline{\phi}_{\omega}) = 0$, and by equation \eqref{eq_Jom_log} this holds if and only if $\frac{\overline{\phi}_{\omega}}{\phi_{\omega}}$ is a function just of $s$.
Substituting $\overline{\phi}_{\omega}(x,s)=\beta(s)\phi_{\omega}(x,s)$ into \eqref{eq_phi*} and using \eqref{eq_phi+} gives
\begin{equation}
\beta'(s)\phi_{\omega}+2\beta(s)\frac{\partial \phi_{\omega}}{\partial s}=0.
\end{equation}
Since $\phi_{\omega}=0$ on $\partial\tilde{\Omega}(s)$ and $\beta(s)>0$, this implies that also the (possibly one-sided) derivative $\frac{\partial \phi_{\omega}}{\partial s}$ is zero at each point $x$ on $\partial\tilde{\Omega}(s)$. As $\phi_{\omega}>0$ on the interior and is zero  with non-zero normal derivative on the boundary, we deduce that in fact the boundary $\partial\tilde{\Omega}(s)$ must remain the same for all times $s$: the domain is time-independent. Conversely, for all such time-independent domains, we have $\beta=1$ and $\mu$ does not depend on $\omega$. Thus $\mu'(\omega)=0$ if and only if the domain is independent of time.
\end{proof}
\begin{remark}
It is possible to use the same proof to extend Lemma \ref{lemma_omega_even} to operators of the form $\frac{\partial}{\partial t}-D\nabla^2+V(t)$ on $\Omega(t)$, where $V(t)$ (independent of $x$) is a continuous and periodic function with the same period as the domain, and satisfies the extra condition that $V(t)\equiv V(-t)$. For operators of this form, the monotonicity result of Theorem \ref{theorem_monotonicity_omega_new} then also follows as above. However, the monotonicity results may not carry over to more general forms of the operator.
\end{remark}

\section{Nonlinear equation on a periodic domain}\label{section_nonlinear}
In this section, we consider the nonlinear periodic parabolic problem \eqref{eq_u_ij}, \eqref{eq_u_ij_BC} where $f$ is assumed to satisfy the conditions \eqref{assumptions_on_f}.
As above, let $\mu$ and $\phi(\xi,t)$ be the principal periodic eigenvalue and eigenfunction satisfying \eqref{eq_princ_eig1}, \eqref{eq_princ_eig2}, \eqref{eq_princ_eig3}, \eqref{eq_princ_eig4}, and normalised so that $\vert\vert \phi \vert\vert_{\infty}=1$. Now the solution to the linear equation is a supersolution to the nonlinear problem, so if $f'(0)<\mu$ then $u\rightarrow 0$ as $t\rightarrow \infty$.

From now on, assume $f'(0)>\mu$. Fix any $\alpha \in (0, f'(0)-\mu)$. Then since $f(u)=f'(0)u+o(u)$ as $u\rightarrow0$ there exists $\varepsilon>0$ (depending on $\alpha$) such that for all $0\leq u\leq \varepsilon$,
\begin{equation}
(\alpha - f'(0) +\mu)u +(f'(0)u-f(u))\leq 0.
\end{equation}
Now, for any $\delta$ such that $0<\delta\leq\varepsilon e^{-\alpha T}$, the function $\hat{u}(\xi,t)=\delta \phi(\xi,t)e^{\alpha t}$ is a subsolution for $u$ for $0\leq t\leq T$:
\begin{align}
\frac{\partial \hat{u}}{\partial t} - \mathcal{L}\hat{u} - f(\hat{u}) &= \alpha\hat{u} + \mu\hat{u}- f(\hat{u}) \\
&= (\alpha - f'(0) +\mu)\hat{u} +(f'(0)\hat{u}-f(\hat{u})) \ \leq 0
\end{align}
since $\hat{u}(\xi,t)\leq \varepsilon$ for $0\leq t\leq T$. The function $\hat{u}$ also satisfies $\hat{u}(\xi,t)=0$ on $\partial\Omega_0$, and $\hat{u}(\xi,0)\leq\hat{u}(\xi,T)$, and so it is a subsolution to the periodic problem \eqref{eq_u_ij}, \eqref{eq_u_ij_BC} in the sense of Hess \cite[chapter III Definition 21.1]{Hess}. Moreover the constant $K$ is a supersolution. By applying \cite[Theorem 22.3, chapter III]{Hess}, there exists a stable periodic solution $u^{*}(\xi,t)$ to
\begin{equation}\label{eq_periodic_sol1}
\frac{\partial u^{*}}{\partial t} = \mathcal{L}u^{*} +f(u^{*})  \qquad \textrm{in } \xi\in\Omega_0, \ t\in\mathbb{R}
\end{equation}
\begin{equation}\label{eq_periodic_sol2}
u^{*}(\xi,t)=0 \qquad \textrm\qquad\textrm{for } \xi\in \partial\Omega_0
\end{equation}
\begin{equation}\label{eq_periodic_sol3}
u^{*}(\xi,t)\equiv u^{*}(\xi,t+T)
\end{equation}
such that
\begin{equation}
\varepsilon\phi(\xi,t)e^{\alpha (t-T)} \leq u^{*}(\xi,t) \leq K \qquad \textrm{for } 0\leq t\leq T,\ \xi\in \Omega_0.
\end{equation}
In the remainder of this section, we shall prove that the periodic solution $u^{*}$ is unique and that for any initial conditions $0\leq u(\xi,0)\leq K$ not identically zero, the solution $u(\xi,t)$ to the problem \eqref{eq_u_ij}, \eqref{eq_u_ij_BC} converges to $u^{*}$.
\begin{remark}
It is straightforward to derive a lower bound on $u(\xi,t)$ which shows that $u(\xi,t)$ cannot converge to zero. By the strong maximum principle and Hopf's lemma, we can assume without loss of generality that there exists $0<\delta \leq\varepsilon e^{-\alpha T}$ such that $\delta\phi(\xi,0) \leq u(\xi,0)$ (since this will hold for every time $t_0>0$). Then, since $\delta\phi(\xi,t)e^{\alpha t}$ is a subsolution on $0\leq t\leq T$,
\begin{equation}
\delta\phi(\xi,t')e^{\alpha t'} \leq u(\xi,t') \qquad \textrm{for all }
0\leq t'\leq T.
\end{equation}
Then as a consequence of the $T$-periodicity of $\phi$, $\delta\phi(\xi,0)\leq \delta\phi(\xi,0)e^{\alpha T} \leq u(\xi,T)$ and we can conclude that
\begin{equation}
\delta\phi(\xi,t')e^{\alpha t'} \leq u(\xi,t'+nT) \qquad \textrm{for all }0\leq t'\leq T,\ n\in\mathbb{N}.
\end{equation}
Therefore
\begin{equation}
\liminf_{t\rightarrow\infty} u(\xi,t) \geq \delta\min_{0\leq t' \leq T}(\phi(\xi,t')e^{\alpha t'}).
\end{equation}
\end{remark}
To prove the convergence to $u^{*}$, we shall use the Poincar\'{e} map $P_T$. For each $\tau> 0$ define $P_{\tau}$ to be the map  $P_{\tau}(u_0)=u(\cdot,{\tau})$
where $u$ is the solution to the problem \eqref{eq_u_ij}, \eqref{eq_u_ij_BC} with initial conditions $u(\cdot,0)=u_0(\cdot)$. Since the coefficients are periodic, this is the same as the map taking $u(\cdot, nT)$ to $u(\cdot, nT+\tau)$ for any $n\in \mathbb{N}$. The Poincar\'{e} map is $P_T$, which takes the solution at time $nT$ to the solution at time $(n+1)T$. If $u^{*}$ is a $T$-periodic solution (satisfying \eqref{eq_periodic_sol1}, \eqref{eq_periodic_sol2}, \eqref{eq_periodic_sol3}), then $u^{*}(\cdot, 0)$ is a fixed point of the Poincar\'{e} map $P_T$.

We shall use the following two properties of $P_{\tau}$.
\begin{lemma}\label{lemma_monotonicity} Monotonicity of $P_{\tau}$.\\
For any $\tau>0$, the map $P_{\tau}$ is monotonic: if $u_0 \leq v_0$ then $P_{\tau}(u_0) \leq P_{\tau}(v_0)$. Moreover, either $u_0 \equiv v_0$ or else there is strict inequality $P_{\tau}(u_0) < P_{\tau}(v_0)$ in $\Omega_0$ and the normal derivatives satisfy $\frac{\partial}{\partial \nu}P_{\tau}(u_0) \neq \frac{\partial}{\partial \nu}P_{\tau}(v_0)$ on $\partial\Omega_0$.
\end{lemma}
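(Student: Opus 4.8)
The plan is to reduce the comparison between the two trajectories to a linear parabolic inequality for their difference and then invoke the standard parabolic maximum principles. Given $u_0\le v_0$, let $u(\xi,t)$ and $v(\xi,t)$ be the solutions of \eqref{eq_u_ij}, \eqref{eq_u_ij_BC} with these initial data and put $w=v-u$. Since $f$ is Lipschitz we may write $f(v)-f(u)=c(\xi,t)\,w$ with $c(\xi,t):=\int_0^1 f'\!\big(u+\theta w\big)\,d\theta$, a function that is bounded (by the Lipschitz constant of $f$) and, because $f$ is only Lipschitz, merely measurable. Then $w$ satisfies
\begin{equation}
\frac{\partial w}{\partial t}=\mathcal{L}(\xi,t)w+c(\xi,t)\,w\ \ \text{in }\Omega_0,\qquad w=0\ \text{on }\partial\Omega_0,\qquad w(\cdot,0)=v_0-u_0\ge 0.
\end{equation}

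First I would establish $w\ge 0$. Replacing $w$ by $e^{-\Lambda t}w$ with $\Lambda>\sup|c|$ converts the zeroth-order term into a non-positive one, so the classical weak parabolic maximum principle on $\overline{\Omega_0}\times[0,\tau]$ applies: $w\ge 0$ on the parabolic boundary forces $w\ge 0$ throughout, hence $P_{\tau}(u_0)=u(\cdot,\tau)\le v(\cdot,\tau)=P_{\tau}(v_0)$. If $u_0\equiv v_0$ the two trajectories coincide and $P_{\tau}(u_0)=P_{\tau}(v_0)$, so assume $u_0\not\equiv v_0$, i.e.\ $w(\cdot,0)\ge 0$ is not identically zero. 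Were $w$ to vanish at some interior point $(\xi_1,t_1)$ with $\xi_1\in\Omega_0$ and $0<t_1\le\tau$, this would be an interior minimum of $w$ over $\Omega_0\times[0,t_1]$, and the strong parabolic maximum principle would force $w\equiv 0$ on $\Omega_0\times[0,t_1]$, contradicting $w(\cdot,0)\not\equiv 0$. Hence $w>0$ in $\Omega_0\times(0,\tau]$; in particular $P_{\tau}(u_0)<P_{\tau}(v_0)$ in $\Omega_0$. Finally, for each $\xi_0\in\partial\Omega_0$, since $w>0$ in a parabolic neighbourhood of $(\xi_0,\tau)$ inside the cylinder while $w(\xi_0,\tau)=0$, the parabolic Hopf boundary point lemma gives $\frac{\partial w}{\partial\nu}(\xi_0,\tau)<0$, which is exactly $\frac{\partial}{\partial\nu}P_{\tau}(u_0)\neq\frac{\partial}{\partial\nu}P_{\tau}(v_0)$ on $\partial\Omega_0$.

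The one point requiring care — and the main (though mild) obstacle — is that $c$ is only bounded and measurable, so the strong maximum principle and the Hopf lemma invoked above must be taken in the forms valid for uniformly parabolic operators with bounded measurable lower-order coefficients (available, e.g., via the parabolic Harnack inequality). Alternatively, one may first approximate $f$ by smooth functions with uniformly bounded derivatives, apply the classical $C^1$ theory, and pass to the limit using continuous dependence of solutions on the nonlinearity and initial data. Everything else is routine.
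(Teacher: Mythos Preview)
Your argument is correct and is exactly the approach the paper has in mind: the paper's own proof is the one-line statement that the lemma ``is a consequence of the parabolic comparison principle, strong maximum principle, and Hopf's Lemma.'' You have simply spelled out these standard steps (linearizing via $c(\xi,t)=\int_0^1 f'(u+\theta w)\,d\theta$ and noting the mild regularity issue for the zeroth-order coefficient), which is entirely in line with the paper's intent.
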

\begin{proof}
This is a consequence of the parabolic comparison principle, strong maximum principle, and Hopf's Lemma.
\end{proof}
\begin{lemma} \label{lemma_sublinearity} Sublinearity of $P_{\tau}$.\\
Let $f$ satisfy \eqref{assumptions_on_f}. Then for any $\tau>0$, the map $P_{\tau}$ is sublinear, in the following sense. Let $0\leq\alpha\leq 1$, and let $u_0>0$ on $\Omega_0$, with $u_0=0$ and $\frac{\partial u_0}{\partial \nu}\neq 0$ on $\partial\Omega_0$. Then
\begin{equation}
\alpha P_{\tau}(u_0) \leq P_{\tau}(\alpha u_0).
\end{equation}
\end{lemma}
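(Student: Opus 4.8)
The plan is to reduce the claimed inequality to the pointwise estimate $f(\alpha u)\ge\alpha f(u)$, which is exactly what the monotonicity of $k\mapsto f(k)/k$ provides, and then to feed this into the parabolic comparison principle.

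First I would dispose of the endpoints. If $\alpha=1$ the statement is an equality, and if $\alpha=0$ then $P_\tau(0)=0$, since $f(0)=0$ forces $u\equiv 0$ to be the solution emanating from zero initial data; in both cases there is nothing to prove, so I may assume $0<\alpha<1$. Next, let $u(\xi,t)$ be the solution of \eqref{eq_u_ij}, \eqref{eq_u_ij_BC} with $u(\cdot,0)=u_0$, so that $P_\tau(u_0)=u(\cdot,\tau)$, and let $v(\xi,t)$ be the solution with $v(\cdot,0)=\alpha u_0$, so that $P_\tau(\alpha u_0)=v(\cdot,\tau)$. Since $u_0>0$ in $\Omega_0$ and $f(0)=0$, the strong maximum principle gives $u(\xi,t)>0$ for all $\xi\in\Omega_0$ and $0\le t\le\tau$. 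Put $w:=\alpha u$. Then $w=0$ on $\partial\Omega_0$, $w(\cdot,0)=\alpha u_0=v(\cdot,0)$, and, because $\mathcal{L}$ is linear,
\begin{equation*}
\frac{\partial w}{\partial t}-\mathcal{L}w-f(w)=\alpha\bigl(\mathcal{L}u+f(u)\bigr)-\mathcal{L}(\alpha u)-f(\alpha u)=\alpha f(u)-f(\alpha u).
\end{equation*}

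The key step is that the right-hand side is $\le 0$ on $\Omega_0\times[0,\tau]$. For $\xi\in\Omega_0$ we have $0<\alpha u(\xi,t)<u(\xi,t)$, so the hypothesis that $k\mapsto f(k)/k$ is non-increasing gives $f(\alpha u)/(\alpha u)\ge f(u)/u$, that is $f(\alpha u)\ge\alpha f(u)$; on $\partial\Omega_0$ both sides vanish. Hence $w$ is a subsolution of the problem solved by $v$, with the same initial and boundary data, and the parabolic comparison principle yields $w\le v$ on $\Omega_0\times[0,\tau]$. Evaluating at $t=\tau$ gives $\alpha P_\tau(u_0)\le P_\tau(\alpha u_0)$, which is the assertion.

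I do not expect a serious obstacle. The only points that need care are the strict positivity of $u$ in $\Omega_0$ (needed so that the sublinearity hypothesis on $f$ may be applied pointwise) and the fact that $f$ is merely Lipschitz rather than $C^1$, which is nonetheless enough for the comparison principle to apply. The real content of the lemma is simply the translation of ``$f(k)/k$ non-increasing'' into the sublinearity of $f$, after which the comparison principle does the rest.
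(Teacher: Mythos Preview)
Your argument is correct. You set $w=\alpha u$, verify that it is a subsolution of the semilinear problem solved by $v$ (using $f(\alpha u)\ge \alpha f(u)$ from the monotonicity of $f(k)/k$), note that $w$ and $v$ agree on the parabolic boundary, and invoke the comparison principle for parabolic equations with Lipschitz nonlinearity. Nothing is missing.

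The paper, however, proceeds differently. Instead of applying the semilinear comparison principle directly, it perturbs $f$ to $f_\varepsilon(k)=f(k)-\varepsilon k^2$, so that $f_\varepsilon(k)/k$ is \emph{strictly} decreasing and hence $f_\varepsilon(\alpha u_0)-\alpha f_\varepsilon(u_0)\ge \varepsilon\alpha(1-\alpha)u_0^2>0$ in $\Omega_0$. This strict positivity forces $\partial_t(v_\varepsilon-\alpha u_\varepsilon)\big|_{t=0}>0$, which gives the ordering $\alpha u_\varepsilon\le v_\varepsilon$ on a short time interval; a continuation (``maximal $t^*$'') argument then extends the ordering to all $t\ge 0$, and finally one lets $\varepsilon\to 0$. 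In effect the paper re-derives a special case of the comparison principle by hand, trading a clean black-box invocation for a more self-contained but longer argument. Your route is more elementary and more transparent; the paper's route has the minor advantage of not appealing to any off-the-shelf semilinear comparison statement, at the cost of the $\varepsilon$-regularisation and the continuation step.
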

\begin{proof}
If $\alpha=0$ or $1$ then it is obvious, so assume $0<\alpha<1$. Let $u(\xi,t)$ be the solution to \eqref{eq_u_ij}, \eqref{eq_u_ij_BC} with initial conditions $u(\xi,0)=u_0(\xi)$ and $v(\xi,t)$ the solution with initial conditions $v(\xi,0)=\alpha u_0(\xi)$. We need to show that $\alpha u(\xi,t) \leq v(\xi,t)$ for all $t\geq 0$.

By the assumption that $\frac{f(k)}{k}$ is non-increasing, we have $f(\alpha u_0)\geq\alpha f(u_0)$. For $\varepsilon>0$ small, define $f_{\varepsilon}(k)=f(k)-\varepsilon k^2$, so that $\frac{f_{\varepsilon}(k)}{k}$ is strictly decreasing in $k>0$, and
\begin{equation}
f_{\varepsilon}(\alpha u_0)- \alpha f_{\varepsilon}(u_0) \geq \varepsilon\alpha(1-\alpha)u_0^2 \ >0\quad\textrm{in }\Omega_0.
\end{equation}
Let $v_{\varepsilon}$, $u_{\varepsilon}$ be the corresponding solutions to the problem with $f$ replaced by $f_{\varepsilon}$:
\begin{equation}
\frac{\partial u_{\varepsilon}}{\partial t}=\mathcal{L}u_{\varepsilon} +f_{\varepsilon}(u_{\varepsilon}), \qquad
\frac{\partial v_{\varepsilon}}{\partial t}=\mathcal{L}v_{\varepsilon} +f_{\varepsilon}(v_{\varepsilon}).
\end{equation}
We shall show that $\alpha u_{\varepsilon}(\xi,t) \leq v_{\varepsilon}(\xi,t)$ for every $t\geq 0$. Then by taking $\varepsilon\rightarrow 0$ we conclude that the same inequality holds for the solutions $v$, $u$ with the original reaction function $f$.

At $t=0$ we have $v_{\varepsilon}(\cdot,0)- \alpha u_{\varepsilon}(\cdot,0)=0$ and
\begin{equation}
\frac{\partial}{\partial t}(v_{\varepsilon}-\alpha u_{\varepsilon})\vert_{t=0} = \mathcal{L}(\alpha u_0)+f_{\varepsilon}(\alpha u_0) - \alpha\mathcal{L}(u_0)-\alpha f_{\varepsilon}(u_0) =f_{\varepsilon}(\alpha u_0) -\alpha f_{\varepsilon}(u_0)  \geq \varepsilon\alpha(1-\alpha)u_0^2.
\end{equation}
Therefore, there exists $t_0>0$ such that
$0\leq v_{\varepsilon}(\xi,t)- \alpha u_{\varepsilon}(\xi,t)$ for $0\leq t\leq t_0$. If $t_0$ can be taken as large as we like, then we are done. Otherwise, let $t^{*}$ be the maximal such that $\alpha u_{\varepsilon}(\xi,t) \leq v_{\varepsilon}(\xi,t)$ for $0\leq t\leq t^{*}$.
Let $\tilde{v}_{\varepsilon}$ be the solution on $t\geq t^{*}$ with $\tilde{v_{\varepsilon}}(\xi,t^{*})=\alpha u_{\varepsilon}(\xi,t^{*})$. Then by applying the same argument as above, to the function $\tilde{v}_{\varepsilon}$ at time $t^{*}$, we deduce that
there exists $t_1>0$ such that $\alpha u_{\varepsilon} \leq \tilde{v}_{\varepsilon}$ for $t^{*}\leq t\leq t^{*}+t_1$. Since $\tilde{v}_{\varepsilon}\leq v_{\varepsilon}$, this contradicts the maximality of $t^{*}$. Therefore, we do have $\alpha u_{\varepsilon}(\xi,t) \leq v_{\varepsilon}(\xi,t)$ for all $t\geq 0$, as required.
\end{proof} 

\begin{theorem}Uniqueness of periodic solution (given ordering). \label{theorem_unique_per}\\
Suppose $f'(0)>\mu$, and suppose that $\underline{U}(\xi,t)$, $\overline{U}(\xi,t)$ are both positive, $T$-periodic solutions to the problem \eqref{eq_periodic_sol1}, \eqref{eq_periodic_sol2}, \eqref{eq_periodic_sol3}
with $0\leq \underline{U}(\xi,0)\leq\overline{U}(\xi,0)$ for all $\xi\in\Omega_0$. Then $\underline{U}(\xi,t)\equiv \overline{U}(\xi,t)$.
\end{theorem}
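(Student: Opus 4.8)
The plan is to apply the classical scheme ``strong monotonicity plus sublinearity of the Poincaré map implies uniqueness of a positive fixed point'', using Lemmas~\ref{lemma_monotonicity} and~\ref{lemma_sublinearity}; the condition $f'(0)>\mu$ will be needed only to eliminate one degenerate case at the end. Throughout, note that $\overline{U}(\cdot,0)$ and $\underline{U}(\cdot,0)$, being time slices of positive $T$-periodic solutions of a nondegenerate parabolic problem, belong to $C^{1}(\overline{\Omega_0})$, are positive in $\Omega_0$, vanish on $\partial\Omega_0$, and have nonzero normal derivatives there (Hopf). Set
\begin{equation}
\alpha^{*}=\sup\bigl\{\alpha\in[0,1]:\ \alpha\,\overline{U}(\cdot,0)\le \underline{U}(\cdot,0)\ \text{in }\Omega_0\bigr\}.
\end{equation}
Using the Hopf property and compactness of $\overline{\Omega_0}$, the quotient $\underline{U}(\cdot,0)/\overline{U}(\cdot,0)$ extends continuously and strictly positively to $\overline{\Omega_0}$, so $\alpha^{*}>0$ and the supremum is attained. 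It suffices to prove $\alpha^{*}=1$: then $\overline{U}(\cdot,0)\le\underline{U}(\cdot,0)$, and with the hypothesis $\underline{U}(\cdot,0)\le\overline{U}(\cdot,0)$ this gives $\underline{U}(\cdot,0)\equiv\overline{U}(\cdot,0)$; uniqueness for the parabolic initial value problem together with periodicity then yields $\underline{U}\equiv\overline{U}$ for all $t$.

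Suppose, for contradiction, that $\alpha^{*}<1$. Since $\overline{U}(\cdot,0)$ and $\underline{U}(\cdot,0)$ are fixed points of $P_{T}$, Lemma~\ref{lemma_sublinearity} (with $\alpha=\alpha^{*}$, $u_{0}=\overline{U}(\cdot,0)$) and Lemma~\ref{lemma_monotonicity} give
\begin{equation}
\alpha^{*}\overline{U}(\cdot,0)=\alpha^{*}P_{T}(\overline{U}(\cdot,0))\ \le\ P_{T}(\alpha^{*}\overline{U}(\cdot,0))\ \le\ P_{T}(\underline{U}(\cdot,0))=\underline{U}(\cdot,0).
\end{equation}
If $\alpha^{*}\overline{U}(\cdot,0)\not\equiv\underline{U}(\cdot,0)$, the strong comparison part of Lemma~\ref{lemma_monotonicity} makes the second inequality strict in $\Omega_0$ with a strict gap between the normal derivatives on $\partial\Omega_0$; combined with the first inequality (an equality on $\partial\Omega_0$) this gives $\alpha^{*}\overline{U}(\cdot,0)<\underline{U}(\cdot,0)$ in $\Omega_0$ and $\partial_{\nu}\bigl(\alpha^{*}\overline{U}(\cdot,0)\bigr)>\partial_{\nu}\underline{U}(\cdot,0)$ on $\partial\Omega_0$. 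The same ratio-and-compactness argument then produces $\delta>0$ with $(\alpha^{*}+\delta)\overline{U}(\cdot,0)\le\underline{U}(\cdot,0)$, contradicting the maximality of $\alpha^{*}$.

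The remaining, and \emph{main}, difficulty is the borderline case $\alpha^{*}\overline{U}(\cdot,0)\equiv\underline{U}(\cdot,0)$ with $0<\alpha^{*}<1$. In this case $\underline{U}(\cdot,t)$ is exactly the solution with initial datum $\alpha^{*}\overline{U}(\cdot,0)$, so $w:=\underline{U}-\alpha^{*}\overline{U}\ge0$ for all $t\ge0$ by Lemma~\ref{lemma_sublinearity}, while periodicity forces $w(\cdot,0)=w(\cdot,T)\equiv0$. Writing the equation for $w$ in the form $\partial_{t}w-\mathcal{L}w+c^{-}(\xi,t)\,w=g(\xi,t)$ with $g=f(\alpha^{*}\overline{U})-\alpha^{*}f(\overline{U})\ge0$ and $c^{-}\ge0$ obtained from the Lipschitz bound on $f$, the strong minimum principle applied to the nonnegative supersolution $w$, which vanishes at $t=0$ and at $t=T$, forces $w\equiv0$ on $[0,T]$. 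Hence $\underline{U}\equiv\alpha^{*}\overline{U}$, and substituting into the PDE gives $f(\alpha^{*}\overline{U}(\xi,t))=\alpha^{*}f(\overline{U}(\xi,t))$ for all $\xi,t$. With $M=\max\overline{U}$, it follows that the nonincreasing function $k\mapsto f(k)/k$ is constant on $[\alpha^{*}y,y]$ for every $y\in(0,M]$, hence constant on $(0,M]$, hence equal there to its limit $f'(0)$. Therefore $f(u)=f'(0)u$ on the entire range of $\overline{U}$, so $\overline{U}$ is a positive, bounded, $T$-periodic solution of the linear problem \eqref{eq_u_ij}, \eqref{eq_u_ij_BC} with $f(u)=f'(0)u$. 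Sandwiching $\overline{U}(\cdot,0)$ between positive multiples of $\phi(\cdot,0)$ and applying the bounds \eqref{eq_u_phi_mu} with $f'(0)>\mu$ forces $\overline{U}(\xi,t)\to\infty$ as $t\to\infty$, contradicting the boundedness of $\overline{U}$. This rules out the borderline case, so $\alpha^{*}=1$, and the theorem follows.
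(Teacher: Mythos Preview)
Your proof is correct and follows essentially the same route as the paper: define the maximal $\alpha^{*}$ (the paper's $\hat r$), combine sublinearity and strong monotonicity of $P_T$ to force the borderline case $\alpha^{*}\overline U(\cdot,0)\equiv\underline U(\cdot,0)$, then use $f'(0)>\mu$ to exclude $\alpha^{*}<1$. Your treatment of the passage from $\alpha^{*}\overline U(\cdot,0)\equiv\underline U(\cdot,0)$ to $\alpha^{*}\overline U\equiv\underline U$ on $[0,T]$ via the strong minimum principle for $w=\underline U-\alpha^{*}\overline U$ is in fact more explicit than the paper, which asserts this identity directly; the only cosmetic point is that the coefficient in the equation for $w$ need not have a sign, so rather than ``$c^{-}\ge0$'' it is cleaner to note that $w\ge0$ satisfies $\partial_t w-\mathcal{L}w-cw\ge0$ with bounded $c$, and that $w(\cdot,T)\equiv0$ at interior spatial points then forces $w\equiv0$ by the parabolic strong minimum principle (valid for bounded $c$ when the minimum is zero).
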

\begin{proof}
By the strong maximum principle and Hopf's lemma, $0<\underline{U}(\xi,0)\leq\overline{U}(\xi,0)$ for all $\xi\in\Omega_0$, and $\underline{U}$ and $\overline{U}$ have non-zero normal derivatives on $\partial\Omega_0$. Therefore for $r>0$ small enough we have $r\overline{U}(\xi,0)\leq \underline{U}(\xi,0)$ for all $\xi\in\Omega_0$. On the other hand this does not hold for any $r>1$. Let 
\begin{equation}
\hat{r}=\sup\{ r\in(0,1): r\overline{U}(\xi,0)\leq \underline{U}(\xi,0) \textrm{ for all } \xi\in\Omega_0 \}.
\end{equation}
Then we know that
\begin{equation}
\hat{r}\overline{U}(\xi,0)\leq \underline{U}(\xi,0) \textrm{ for all } \xi\in\Omega_0
\end{equation}
and (by maximality of $\hat{r}$) there exists some
\begin{equation}\label{eq_xi0}
\xi_0\in\Omega_0 \textrm{ such that }\hat{r}\overline{U}(\xi_0,0) = \underline{U}(\xi_0,0) \qquad \textrm{or}\qquad
\xi_0 \in \partial\Omega_0 \textrm{ such that } \hat{r}\frac{\partial \overline{U}}{\partial \nu}(\xi_0,0) =  \frac{\partial \underline{U}}{\partial \nu}(\xi_0,0).
\end{equation}
Now apply the Poincar\'{e} map, $P_T$. By the monotonicity (Lemma \ref{lemma_monotonicity}) we have
\begin{equation}
P_T(\hat{r}\overline{U}(\cdot,0)) \leq P_T(\underline{U}(\cdot,0)) 
\end{equation}
with either $\hat{r}\overline{U} \equiv \underline{U}$ or else strict inequality 
\begin{equation}\label{eq_strict1}
P_T(\hat{r}\overline{U}(\cdot,0)) < P_T(\underline{U}(\cdot,0)) \qquad \textrm{on } \Omega_0
\end{equation}
and
\begin{equation}\label{eq_strict2}
\frac{\partial}{\partial \nu}P_T(\hat{r}\overline{U}(\cdot,0)) \neq \frac{\partial}{\partial \nu}P_T(\underline{U}(\cdot,0)) \qquad \textrm{on } \partial\Omega_0.
\end{equation}
Combining this with the sublinearity property (Lemma \ref{lemma_sublinearity}) and the fact that $\overline{U}$ and $\underline{U}$ are fixed points of $P_T$, we find that
\begin{equation}\label{ineq_for_uniqueness}
\hat{r}\overline{U}(\cdot,0) = \hat{r}P_T(\overline{U}(\cdot,0)) \leq 
P_T(\hat{r}\overline{U}(\cdot,0)) \leq P_T(\underline{U}(\cdot,0))=\underline{U}(\cdot,0)
\end{equation}
and that either $\hat{r}\overline{U} \equiv \underline{U}$ or else equations \eqref{eq_strict1} and \eqref{eq_strict2} hold. Incorporating these strict inequalities into equation \eqref{ineq_for_uniqueness} would contradict the existence of $\xi_0$ as in equation \eqref{eq_xi0}. Therefore, in fact
\begin{equation}
\hat{r}\overline{U} \equiv \underline{U} \qquad\textrm{on } \overline{\Omega_0}\times[0,T].
\end{equation}
This shows that $\overline{U}$ and $\hat{r}\overline{U}$ are both solutions to \eqref{eq_periodic_sol1}, \eqref{eq_periodic_sol2}, \eqref{eq_periodic_sol3}, and hence $\hat{r}f(\overline{U})\equiv f(\hat{r}\overline{U})$. By the assumption that $\frac{f(k)}{k}$ is non-increasing on $k>0$, this implies that either $\hat{r}=1$ or else $f(\overline{U}) \equiv f'(0)\overline{U}$.
But we know that $\overline{U}$ does not satisfy the linear equation because that would contradict the fact that $f'(0)>\mu$. Therefore, it must be that $\hat{r}=1$ and $\underline{U}\equiv \overline{U}$.
\end{proof}
Now we are able to prove convergence to $u^{*}(\xi,t)$ (the positive $T$-periodic solution to \eqref{eq_periodic_sol1}, \eqref{eq_periodic_sol2}, \eqref{eq_periodic_sol3} whose existence is guaranteed by \cite[Theorem 22.3, chapter III]{Hess}).
\begin{theorem}\label{theorem_convergence1}
Assume that $f$ satisfies \eqref{assumptions_on_f} and $f'(0)>\mu$, and let $u^{*}(\xi,t)$ be a positive $T$-periodic solution to \eqref{eq_periodic_sol1}, \eqref{eq_periodic_sol2}, \eqref{eq_periodic_sol3}.
Given non-negative, not identically zero initial conditions $0\leq u(\xi,0)\leq K$, let $u(\xi,t)$ be the solution to the nonlinear problem \eqref{eq_u_ij}, \eqref{eq_u_ij_BC}, and for $n\in\mathbb{N}$ define $u_n(\xi, t)=u(\xi,nT+t)$. Then as $n\rightarrow\infty$, $u_n$ converges in $C^{2,1}(\overline{\Omega_0}\times[0,T])$ to $u^{*}(\xi,t)$. In particular, $u^{*}$ is unique.
\end{theorem}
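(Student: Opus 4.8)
The plan is to trap $u(\cdot,nT)$ between two sequences obtained by iterating the Poincar\'{e} map $P_T$ from a small subsolution and a large supersolution, to collapse those two sequences to a single positive periodic solution using Theorem~\ref{theorem_unique_per}, and then to upgrade the resulting convergence to $C^{2,1}$ by parabolic regularity.

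First I would normalise the data. By the strong maximum principle and Hopf's lemma (exactly as in the remark preceding Lemma~\ref{lemma_monotonicity}) we may assume, after relabelling $u_n\mapsto u_{n+1}$ if necessary, that there is a $\delta\in(0,\varepsilon e^{-\alpha T}]$ with $\delta\phi(\xi,0)\leq u(\xi,0)$, and of course $u(\xi,0)\leq K$. Fix a constant $M\geq K$ with $M\geq u^{*}(\xi,0)$ for all $\xi$ (possible since $u^{*}$ is continuous on the compact cylinder). Because $\delta\phi(\xi,t)e^{\alpha t}$ is a subsolution on $[0,T]$ and $\phi$ is $T$-periodic, the comparison principle gives $P_T(\delta\phi(\cdot,0))\geq \delta\phi(\cdot,T)e^{\alpha T}=\delta\phi(\cdot,0)e^{\alpha T}\geq \delta\phi(\cdot,0)$; and since $f(k)/k\leq f(K)/K=0$ for $k\geq K$, the constant $M$ is a time-independent supersolution, so $P_T(M)\leq M$. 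Writing $a_n=P_T^{\,n}(\delta\phi(\cdot,0))$ and $b_n=P_T^{\,n}(M)$, the monotonicity of $P_T$ (Lemma~\ref{lemma_monotonicity}) shows that $a_n$ is non-decreasing, $b_n$ is non-increasing, all lie in $[\delta\phi(\cdot,0),M]$, and
\[
a_n\leq u(\cdot,nT)\leq b_n \qquad\text{for every }n.
\]

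Next I would identify the common limit of $a_n$ and $b_n$. Each $a_n$, $b_n$ is the time-$T$ value of a solution with data in $[0,M]$, so by interior--boundary parabolic Schauder estimates (available because the coefficients of $\mathcal{L}$ lie in $C^{\alpha,\alpha/2}(\overline{\Omega_0}\times[0,T])$ and are $T$-periodic) the families $\{a_n\}$, $\{b_n\}$ are bounded in $C^{2+\beta}(\overline{\Omega_0})$ for some $\beta>0$, hence precompact in $C^{2}$; being monotone, they converge in $C^{2}$ to limits $\underline U(\cdot,0)\leq\overline U(\cdot,0)$, which are fixed points of $P_T$ (by continuity of $P_T$) and therefore extend to positive $T$-periodic classical solutions $\underline U,\overline U$ of \eqref{eq_periodic_sol1}, \eqref{eq_periodic_sol2}, \eqref{eq_periodic_sol3} (positivity since $\underline U(\cdot,0)\geq\delta\phi(\cdot,0)>0$). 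Theorem~\ref{theorem_unique_per} now forces $\underline U\equiv\overline U$; call this solution $u^{**}$. Since $\delta\phi(\cdot,0)\leq u^{*}(\cdot,0)\leq M$ and $u^{*}(\cdot,0)$ is a fixed point of $P_T$, monotonicity gives $a_n\leq u^{*}(\cdot,0)\leq b_n$, so $u^{**}(\cdot,0)=u^{*}(\cdot,0)$ and $u^{**}=u^{*}$. As $u^{**}$ was defined purely from $\delta\phi(\cdot,0)$ and $M$, this also proves that any positive $T$-periodic solution equals $u^{**}$, i.e.\ uniqueness.

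Finally I would pass to the full cylinder. The sandwich $a_n\leq u(\cdot,nT)\leq b_n$ and $a_n,b_n\to u^{*}(\cdot,0)$ give $u(\cdot,nT)\to u^{*}(\cdot,0)$ in $C^{0}(\overline{\Omega_0})$; moreover $\{u(\cdot,nT)\}$ is itself precompact in $C^{2+\beta}$ (Schauder, as above) and any $C^{2}$-subsequential limit is squeezed to $u^{*}(\cdot,0)$, so $u(\cdot,nT)\to u^{*}(\cdot,0)$ in $C^{2+\beta}$. Since $u_n(\cdot,t)=P_t(u(\cdot,nT))$ and $u^{*}(\cdot,t)=P_t(u^{*}(\cdot,0))$, continuous dependence of the parabolic flow on the (now $C^{2+\beta}$-convergent) initial data, uniformly for $t\in[0,T]$, yields $u_n\to u^{*}$ in $C^{2,1}(\overline{\Omega_0}\times[0,T])$. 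The main obstacle I anticipate is the middle step: ensuring the monotone $P_T$-iterates converge to genuine $T$-periodic classical solutions so that Theorem~\ref{theorem_unique_per} applies, which is exactly where the uniform parabolic regularity is needed; a secondary technical point is that $u^{*}$ is not assumed a priori to lie below $K$, which is why the large constant $M\geq K$, rather than $K$ itself, is taken as the supersolution.
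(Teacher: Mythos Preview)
Your argument is correct and follows the same overall architecture as the paper's proof: trap $u(\cdot,nT)$ between two monotone $P_T$-iterates, show these converge to ordered positive periodic solutions, invoke Theorem~\ref{theorem_unique_per} to collapse them, and upgrade via parabolic regularity. The genuine difference is in the choice of barriers and the mechanism giving monotonicity of the iterates. The paper takes the barriers to be scalar multiples of the given periodic solution itself, $\underline{u}(\cdot,0)=\delta u^{*}(\cdot,0)$ and $\overline{u}(\cdot,0)=B u^{*}(\cdot,0)$, and obtains $P_T(\delta u^{*}(\cdot,0))\geq \delta u^{*}(\cdot,0)$ and $P_T(B u^{*}(\cdot,0))\leq B u^{*}(\cdot,0)$ from the \emph{sublinearity} of $P_T$ (Lemma~\ref{lemma_sublinearity}) combined with $P_T(u^{*}(\cdot,0))=u^{*}(\cdot,0)$. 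You instead use the eigenfunction subsolution $\delta\phi(\cdot,0)$ and a constant supersolution $M\geq K$, so monotonicity of the iterates comes directly from the sub/super\-solution inequalities, without appealing to sublinearity at this stage. The paper's choice makes the identification of the limit with $u^{*}$ automatic (both iterates are sandwiched around $u^{*}$ from the start), whereas you need the extra squeeze $a_n\leq u^{*}(\cdot,0)\leq b_n$; conversely, your choice avoids having to compare the arbitrary initial datum $u(\cdot,0)$ with $u^{*}$ and does not use Lemma~\ref{lemma_sublinearity} in the sandwiching step. Your care in taking $M\geq\max u^{*}$ rather than $K$ is appropriate, since the statement does not assume $u^{*}\leq K$ a~priori.
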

\begin{proof}
Without loss of generality (since it will hold for every time $t_0>0$ by the strong maximum principle and Hopf's Lemma) we can assume that the initial conditions are such that
\begin{equation}
\delta u^{*}(\xi,0) \leq u(\xi, 0)\leq Bu^{*}(\xi,0)
\end{equation}
for some $0<\delta \leq 1$ and $B\geq1$.
Let $\underline{u}(\xi,t)$ and $\overline{u}(\xi,t)$ be the solutions to \eqref{eq_u_ij}, \eqref{eq_u_ij_BC} with initial conditions $\underline{u}(\xi,0)=\delta u^{*}(\xi,0)$ and $\overline{u}(\xi,0)=B u^{*}(\xi,0)$. By the comparison principle,
\begin{equation}\label{eq_u_ineq}
\underline{u}(\cdot,t)\leq u(\cdot,t)\leq \overline{u}(\cdot,t) \qquad \textrm{and}\qquad \underline{u}(\cdot,t)\leq u^{*}(\cdot,t)\leq \overline{u}(\cdot,t)\qquad \textrm{for all } t\geq0.
\end{equation}
For $n\in\mathbb{N}$ define $u_n(\xi, t)=u(\xi,nT+t)$; also define $\underline{u}_n(\xi,t)=\underline{u}(\xi,nT+t)$ and $\overline{u}_n(\xi,t)=\overline{u}(\xi,nT+t)$. Then we have
\begin{equation}\label{eq_u_n_ineq}
\underline{u}_n(\cdot,t)\leq u_n(\cdot,t)\leq \overline{u}_n(\cdot,t)\qquad \textrm{and}\qquad \underline{u}_n(\cdot,t)\leq u^{*}(\cdot,t)\leq \overline{u}_n(\cdot,t)
\end{equation}
for all $0\leq t\leq T$, $n\in\mathbb{N}$.

Using the fact that $u^{*}(\cdot, 0)$ is a fixed point of the Poincar\'{e} map $P_T$, together with the sublinearity of $P_T$ (Lemma \ref{lemma_sublinearity}), we get that for all $\xi\in\Omega_0$,
\begin{equation}
\underline{u}(\xi,0)=\delta u^{*}(\xi,0) = \delta P_T(u^{*}(\xi,0)) \leq
P_T(\delta u^{*}(\xi,0))= P_T(\underline{u}(\xi,0))=\underline{u}(\xi,T)
\end{equation}
and
\begin{align}
\overline{u}(\xi,0)= Bu^{*}(\xi,0) =B P_T(u^{*}(\xi,0)) =
BP_T\left(\frac{1}{B} \ Bu^{*}(\xi,0)\right) \geq P_T( Bu^{*}(\xi,0))= P_T(\overline{u}(\xi,0))= \overline{u}(\xi,T).
\end{align}
Therefore, $\underline{u}(\xi,0) \leq \underline{u}(\xi,T)$ and  $\overline{u}(\xi,T) \leq \overline{u}(\xi,0)$. Apply $P_T$ again and use the monotonicity property (Lemma \ref{lemma_monotonicity}) and the ordering \eqref{eq_u_ineq}, to deduce that
\begin{equation}
\underline{u}(\xi, nT)\leq \underline{u}(\xi, (n+1)T) \leq u^{*}(\xi, 0)\leq \overline{u}(\xi, (n+1)T)\leq \overline{u}(\xi, nT)
\end{equation}
for all $\xi\in\Omega_0$, $n\in\mathbb{N}$.
Therefore, pointwise limits $\underline{v}(\xi)\leq\overline{v}(\xi)$ exist such that $\underline{v}(\xi) \leq u^{*}(\xi, 0)\leq \overline{v}(\xi)$ and
\begin{equation}
\underline{u}(\xi, nT)\rightarrow \underline{v}(\xi), \qquad  \overline{u}(\xi,nT)\rightarrow \overline{v}(\xi) \qquad \textrm{as }n\rightarrow\infty.
\end{equation}
Using parabolic estimates from \cite[Lemma 7.20 and Theorem 7.30]{Lieberman} and \cite[chapter IV, Theorem 10.1]{Ladyzhenskaya} and embeddings from \cite[Theorem 3.14(3)]{BeiHu}, we deduce that there is a subsequence $\underline{u}_{n_k}$ which converges in $C^{2,1}(\overline{\Omega_0}\times[0,T])$ to a solution $\underline{U}(\xi,t)$ of the nonlinear parabolic problem \eqref{eq_u_ij}, \eqref{eq_u_ij_BC}. By equating this to the pointwise limit at times $0$ and $T$, we have that $\underline{U}(\xi,0)=\underline{U}(\xi,T)=\underline{v}(\xi)$.
Likewise, there is a subsequence $\overline{u}_{n_r}$ of $\overline{u}_n$ which converges in $C^{2,1}(\overline{\Omega_0}\times[0,T])$ to a solution $\overline{U}(\xi,t)$ of \eqref{eq_u_ij}, \eqref{eq_u_ij_BC}, with $\overline{U}(\xi,0)=\overline{U}(\xi,T)=\overline{v}(\xi)$.

Now $\underline{U}(\xi,0)=\underline{v}(\xi)\leq u^{*}(\xi, 0)\leq \overline{v}(\xi)=\overline{U}(\xi,0)$ and so by the comparison principle, $\underline{U}(\xi,t)\leq u^{*}(\xi, t)\leq \overline{U}(\xi,t)$ for all $t\geq 0$. Therefore $\underline{U}$ and $\overline{U}$ satisfy the conditions of Theorem \ref{theorem_unique_per}, and we conclude that
\begin{equation}
\underline{U}\equiv \overline{U} \equiv u^{*}.
\end{equation}
Since the limit is uniquely identified, this implies that actually the whole sequences $\underline{u}_n$ and $\overline{u}_n$ converge to $u^{*}$ as $n\rightarrow\infty$ and the convergence is in $C^{2,1}(\overline{\Omega_0}\times[0,T])$.
But since $u_n$ satisfies \eqref{eq_u_n_ineq}, it must also converge uniformly to $u^{*}$ as $n\rightarrow \infty$, and by the same argument as above the convergence is in $C^{2,1}(\overline{\Omega_0}\times[0,T])$.
\end{proof}
The convergence of $u(\xi,nT+t)$ to a unique positive $T$-periodic solution $u^{*}(\xi,t)$ on $\Omega_0\times[0,T]$ can now be interpreted in terms of the original problem for $\psi(x,t)$ on the $T$-periodic domain $\Omega(t)$. The function $u^{*}(\xi,t)$ for $\xi\in\Omega_0$ corresponds to a positive solution $\psi^{*}(x,t)$ to \eqref{eq_psi}, \eqref{eq_psi_BC} such that $\psi^{*}(x,t)\equiv \psi^{*}(x,t+T)$ for all $x\in\Omega(t)$, $t\in\mathbb{R}$. Theorem \ref{theorem_convergence1} means that $\psi(x,nT+t)$ converges uniformly to $\psi^{*}(x,t)$ as $n\rightarrow\infty$.

\section*{Acknowledgements}
Much of this work was carried out during my PhD at Swansea University, and I would like to thank my supervisor Professor Elaine Crooks. I am grateful for an
EPSRC-funded studentship: EPSRC DTP grant EP/R51312X/1,
and research associate funding: EP/W522545/1.

\end{document}